\definecolor{mygreen}{RGB}{31,139,34}
\DeclareMathOperator{\id}{id}
\DeclareMathOperator{\supp}{supp}
\newcommand{\norm}[1]{\left\lVert#1\right\rVert}
\newcommand{\Leb}{\mathscr{L}}
\newcommand{\R}{\mathbb{R}}
\newcommand{\p}{\mathtt p} 
\newcommand{\de}{\ensuremath{\, \mathrm d}} 
\newcommand{\suchthat}{\ensuremath{\,:\,}} 
\newcommand\restr[2]{{
  \left.\kern-\nulldelimiterspace 
  #1 
  \right|_{#2} 
  }}
\newcommand{\CD}{\mathsf{CD}}
\newcommand{\X}{\mathsf{X}} 
\newcommand{\di}{\mathsf d} 
\newcommand{\m}{\mathfrak m} 
\DeclareMathOperator{\Geo}{Geo}
\newcommand{\OptPlans}{\mathsf{OptPlans}}
\newcommand{\Prob}{\mathscr{P}}
\newcommand{\ProbTwo}{\mathscr{P}_2}
\author{
Shucheng Li \footnote{Jesus College, University of Oxford. \textit{E-mail}:  \href{mailto:shucheng.li@jesus.ox.ac.uk}{shucheng.li@jesus.ox.ac.uk}}, \ Mattia Magnabosco\footnote{Mathematical Institute, University of Oxford. \textit{E-mail}:  \href{mailto:mattia.magnabosco@maths.ox.ac.uk}{mattia.magnabosco@maths.ox.ac.uk}}, \ Timo Schultz\footnote{Department of Mathematics and Statistics, University of Jyv\"askyl\"a., \textit{E-mail}: \href{mailto:timo.m.schultz@jyu.fi}{timo.m.schultz@jyu.fi} }}
\title{Metric conditions that guarantee existence and uniqueness of Optimal Transport maps}
\date{}
\newtheoremstyle{remark}
        {10pt}
        {10pt}
        {}
        {}
        {\itshape}
        {.}
        {.4em}
        {}
\newtheoremstyle{proof}
        {10pt}
        {10pt}
        {}
        {}
        {\itshape}
        {.}
        {.4em}
        {}
\newtheoremstyle{definition}
        {10pt}
        {10pt}
        {}
        {}
        {\bfseries}
        {.}
        {.4em}
        {}
\newtheoremstyle{theorem}
        {10pt}
        {10pt}
        {}
        {}
        {\bfseries}
        {.}
        {.4em}
        {}
\theoremstyle{theorem}
\newtheorem{theorem}{Theorem}[section]
\newtheorem{prop}[theorem]{Proposition}
\newtheorem{corollary}[theorem]{Corollary}
\newtheorem{lemma}[theorem]{Lemma}
\theoremstyle{definition}
\newtheorem{definition}[theorem]{Definition}
\theoremstyle{remark}
\newtheorem{remark}[theorem]{Remark}
\theoremstyle{proof}
\newtheorem*{pro}{Proof}
 {\popQED\end{pro}}
\begin{document}
\maketitle

\begin{abstract}
    We investigate metric conditions that allow to prove existence and uniqueness of a map solving the Monge problem between two marginals in a metric (measure) space, proving two main results. Firstly, we introduce a nonsmooth version of the Riemannian twist condition that we call local metric twist condition, showing, under this assumption on the cost function, existence and uniqueness of optimal transport maps. Secondly, we prove the same result for cost equal to $\di^2$ in a metric space $(\X,\di)$ satisfying a quantitative non-branching assumption, that we call locally-uniformly non-branching. 
\end{abstract}

\section{Introduction}

In recent years, the theory of optimal transport has proved effective in many areas of mathematics, from metric geometry to applied mathematics. The optimal transport problem, following the original approach proposed by Monge, reads as follows: given two topological spaces $X,Y$, two probability measures $\mu\in\Prob(X)$, $\nu\in\Prob(Y)$ and a non-negative Borel cost function $c:X\times Y \to  [0,\infty]$, solve the following minimisation problem:
\begin{equation} \label{MongeFormulation}
	\inf\left\{ \int_X c(x,T(x)) \de \mu(x) \suchthat \text{$T\colon X\to Y$ Borel, $T_\# \mu =\nu$} \right\}. \tag{M}
\end{equation}
In many applications it is interesting not only to find the infimum, but also to identify eventual minimisers, which are called optimal transport maps. However, the infimum in \eqref{MongeFormulation} is not realised in general and thus optimal transport maps do not exist in many cases. Therefore, for the development of a general theory, it is necessary to introduce a more general formulation, due to Kantorovich. Defining the set of admissible transport plans from $\mu$ to $\nu$ as
\begin{equation*}
\mathsf{Adm}(\mu,\nu) := \{ \pi\in\Prob(X\times Y) \suchthat (\p_X)_\#\pi = \mu \,\,\text{and} \,\, (\p_Y)_\#\pi = \nu\},
\end{equation*}
the Kantorovich's formulation of the optimal transport problem is the following:
\begin{equation} \label{KantorovichFormulation}
	\inf \left\{ \int_{X\times Y} c(x,y) \de \pi(x,y) \suchthat \pi\in\mathsf{Adm}(\mu,\nu) \right\}. \tag{K}
\end{equation}
In this version of the optimal transport problem the minimum is attained under fairly general assumptions. In particular, this happens whenever $X$ and $Y$ are Polish spaces and the cost function $c:X\times Y \to [0,\infty]$ is lower semicontinuous. Provided that the infimum is finite, the minimisers of the Kantorovich's formulation are called optimal transport plans and the set containing them all will be denoted by $\OptPlans(\mu,\nu)$. An admissible transport plan $\pi\in\mathsf{Adm}(\mu,\nu) $ is said to be induced by a map if there exists a $\mu$-measurable map $T:X \to Y$ such that $\pi=(\id,T)_\# \mu$. Notice that $\pi=(\id,T)_\# \mu$ is an optimal transport plan if and only if $T$ is an optimal transport map.

The problem of addressing existence and uniqueness of optimal transport maps has been deeply studied. In the smooth setting, many of the positive results are obtained assuming the cost function $c$ to satisfy the so-called \emph{twist condition}. In a Riemannian manifold $(M,g)$ a cost function $c:M \times M \to [0,\infty]$ is said to satisfy the twist condition if the function
\begin{equation}\label{eq:twist}
    y \mapsto \nabla_x c\left(x_0, y\right) \text{ is injective for every }x_0\in M.
\end{equation}
In a series of works it has been proved that, whenever $\mu \ll \text{vol}_g$ and the cost function $c$ satisfies the twist condition and some mild differentiability/concavity assumption, there exists a unique optimal transport map (for \eqref{MongeFormulation}), see \cite{Figalli, FathiFigalli,KimMcC,villani2008optimal}. We remark that the absolutely continuity assumption on the first marginal $\mu$ (or any other weakened version of it) is necessary even in the simplest setting, i.e. $X=Y=\R^n$ and $c(x,y)=\norm{x-y}^2$. 

Other notable results regarding existence and uniqueness of optimal transport maps have been proved in the nonsmooth setting of metric measure spaces, i.e. triples $(\X,\mathsf{d},\mathfrak{m})$, where $(\X,\mathsf{d})$ is a Polish metric space and $\mathfrak{m}$ is a locally finite Borel measure on $\X$. In this framework, it is particularly interesting to study the optimal transport problem with $X=Y=\X$ and $c(x,y)=\di(x,y)^2$, as it is the fundamental tool to formulate the groundbreaking theory of $\CD(K,N)$ spaces developed by Sturm \cite{sturmI, sturmII} and Lott-Villani \cite{lott-villani}. The first results proving existence and uniqueness of optimal transport maps for this problem were obtained by Brenier \cite{brenier} in the Euclidean setting, by McCann \cite{McCann} and \cite{Gigli2} in the Riemannian setting and by Ambrosio-Rigot \cite{AmbrosioRigot} and Figalli-Rifford \cite{FigalliRifford} in the sub-Riemannian setting. In nonsmooth spaces, existence and uniqueness of optimal transport maps is usually proved under a synthetic curvature bound and a non-branching assumption. As examples, we mention the following works: \cite{Bertrand,Bertrand_habi,RajalaSchultz} for Alexandrov spaces, \cite{Gigli, RajalaSturm,GRS,cavallettimondino,Schultz,MagnaboscoRigoni} under various curvature-dimension condition and \cite{CavallettiHuesmann,Kell} under quantitative properties on the reference measure. Moreover we remark that, as shown by Rajala in \cite{Rajala} (see also \cite{Magnabosco1,Magnabosco2}), the non-branching assumption is necessary to prove the uniqueness of the transport map.

In this paper, we investigate metric conditions that are sufficient to guarantee existence and uniqueness of optimal transport maps in metric measure spaces. In Section \ref{sec:metrictwist}, we introduce the \emph{local metric twist condition} (in short \textbf{[LMTC]}) for a cost function on a metric space. This is a consistent nonsmooth generalisation of the classical twist condition \eqref{eq:twist} and it is sufficient to ensure the existence and uniqueness of optimal transport maps.

\begin{theorem}\label{thm:main1}
    Let $(\X,\di, \m)$ be a proper metric measure space with $\m$ doubling on $\X$ and let $c:\X\times \X\to [0,\infty]$ be any lower-semi-continuous cost function that satisfies the local metric twist condition. Then, for every pair of measures $\mu,\nu\in \Prob(\X)$ with $\mu \ll \m$, if $\OptPlans(\mu,\nu) \neq \emptyset $ there exists a unique optimal transport map for the problem \eqref{MongeFormulation}.
\end{theorem}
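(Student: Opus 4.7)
The plan is a two-step strategy, reducing the theorem to the standard ``concentrated on the graph of a map'' property of optimal plans. Concretely, I would first show that every $\pi \in \OptPlans(\mu,\nu)$ is concentrated on the graph of a Borel map; the uniqueness statement then follows from the usual averaging argument, since if $T_1, T_2$ induced two distinct optimal maps, the average $\frac{1}{2}((\id,T_1)_\#\mu + (\id,T_2)_\#\mu)$ would itself be optimal and hence, by the same property, concentrated on a graph, forcing $T_1 = T_2$ $\mu$-a.e.

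For the first step, I would invoke Kantorovich duality, which applies since $(\X,\di)$ is Polish, $c$ is lower semi-continuous and $\OptPlans(\mu,\nu) \neq \emptyset$ guarantees finiteness of the optimal cost. This produces a $c$-concave Kantorovich potential $\phi$ such that every $\pi \in \OptPlans(\mu,\nu)$ is concentrated on the $c$-superdifferential $\partial^c \phi \subset \X \times \X$. I would then aim to prove that, for $\mu$-almost every $x$, the section $\partial^c \phi(x) := \{y : (x,y) \in \partial^c\phi\}$ is a singleton; combined with closedness of $\partial^c\phi$ (from lower semi-continuity of $c$), a standard measurable selection yields a Borel map $T$ with $\pi = (\id,T)_\#\mu$, as required.

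The heart of the argument, and the point at which the local metric twist condition is used, is the single-valuedness $\mu$-a.e. If $y_1 \neq y_2$ both belong to $\partial^c\phi(x_0)$, subtracting the two $c$-superdifferential inequalities shows that $x \mapsto c(x,y_1) - c(x,y_2)$ attains a global maximum at $x_0$. The LMTC, being a nonsmooth surrogate for injectivity of $y \mapsto \nabla_x c(x_0,y)$, should forbid such coincidences on a quantifiably thin exceptional set; leveraging a Vitali/Lebesgue-density argument powered by the doubling property of $\m$ -- and first restricting $y_1, y_2$ to a countable dense subset of $\supp \nu$ before transferring to all pairs by approximation -- one expects this exceptional set to be $\m$-negligible, hence $\mu$-negligible since $\mu \ll \m$.

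The hard part is exactly this last estimate. In the smooth setting, single-valuedness at a point of differentiability of $\phi$ reduces immediately to injectivity of the gradient, using that $c$-concave functions are locally Lipschitz and so differentiable almost everywhere on a manifold. In an arbitrary proper metric measure space no such differentiation framework is available, so the quantitative content of LMTC, coupled with the covering arguments enabled by doubling, has to replace it entirely; this is the most delicate part of the proof. Once $\mu$-a.e.\ single-valuedness is secured, the conversion into a Borel map and the averaging-based uniqueness argument are routine.
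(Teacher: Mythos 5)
Your outer structure (uniqueness by the averaging argument once every optimal plan is known to be induced by a map) matches the paper, but the core of your plan has a genuine gap, and it sits exactly where you admit the work is: the passage from \textbf{[LMTC]} to $\mu$-a.e.\ single-valuedness of $\partial^c\phi$ is never carried out, and the reduction you propose for it is incorrect. If $y_1\neq y_2$ both lie in $\partial^c\phi(x_0)$, subtracting the two superdifferential inequalities does \emph{not} show that $x\mapsto c(x,y_1)-c(x,y_2)$ has a global maximum at $x_0$; all you get is that each function $c(\cdot,y_i)-c(x_0,y_i)$ dominates $\phi(\cdot)-\phi(x_0)$ and touches it at $x_0$, which is strictly weaker (the needed chain of inequalities runs the wrong way through $\phi$). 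More structurally, \textbf{[LMTC]} is not a statement about behaviour at a single base point: the set $F_{s\mid r_2}(\bar x\mid y,z)$ encodes the cross-difference inequality $c(x',y')+c(\bar x,z')<c(\bar x,y')+c(x',z')$, which only produces a contradiction when you can exhibit \emph{two pairs with distinct first coordinates}, $(\bar x,\bar y)$ and $(\tilde x,\bar z)$, in a $c$-cyclically monotone set, with $\bar y,\bar z$ in prescribed balls and $\tilde x$ in the twist set. Producing such pairs is the actual content of the paper's proof: it fixes a pair $(y,z)$, $y\neq z$, in the support of $\eta=\int\pi_x\times\pi_x\,\de\mu$, introduces the sets $E_r$ of points whose disintegration fibers charge both $B_r(y)$ and $B_r(z)$, and runs a careful diagonal argument (density points $x_n$ of $E_{1/n}$, a convergent subsequence, and only then the radii $r_1,r_2$ from \textbf{[LMTC]} at the limit point) before combining positive lower density of $F_{s\mid r_2}$ with density one of $E_{r_2}$. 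Your sketch of ``a Vitali/Lebesgue-density argument'' with ``a countable dense subset of $\supp\nu$'' does not address the fact that the radii $r_1,r_2$ in \textbf{[LMTC]} depend on the whole triple $(x,y,z)$ and must be chosen after the pair $(y,z)$ and the base point are fixed; without something like the paper's $E_r$/diagonal construction the argument does not close.

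A secondary but real issue is the appeal to Kantorovich duality. The cost here is only lower semi-continuous with values in $[0,\infty]$, and $\OptPlans(\mu,\nu)\neq\emptyset$ only guarantees finiteness of the optimal cost; the existence of a single $c$-concave potential $\phi$ such that \emph{every} optimal plan is concentrated on $\partial^c\phi$ is not automatic in this generality (it typically requires integrability or finiteness hypotheses on $c$). The paper deliberately sidesteps this by using only Proposition \ref{prop:cmonotonicity} ($c$-cyclical monotonicity of the support), together with the restriction Lemma \ref{lem:RestrictOptimal} to reduce to compactly supported plans in the proper space, and Lemma \ref{lem:NotConcDiag} to extract the off-diagonal pair $(y,z)$. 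If you replace your potential-based single-valuedness claim by this cyclical-monotonicity scheme, you essentially recover the paper's proof; as written, however, the proposal's key step is both unproven and based on a false intermediate claim.
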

Recall that a measure $\m$ is called locally doubling if for any $x_0\in X$ there exists $R>0$ and a constant $C>0$ such that
\begin{align*}
    \m(B_{2r}(x))\le C \m(B_r(x))
\end{align*}
for every $B_{2r}(x)\subset B_R(x_0)$.

We also introduce a pointwise (weaker) version of \textbf{[LMTC]}, called \emph{pointwise metric twist condition}, showing that under a regularity assumption (see Definition \ref{def:C1(Met)}) it is sufficient to imply \textbf{[LMTC]}. We highlight that, to our knowledge, Theorem \ref{thm:main1} is the first result of this type for general cost (not necessarily dependent on the underlying distance) in general metric measure spaces.

In Section \ref{sec:main2}, we prove existence and uniqueness of optimal transport maps for the problem with $X=Y=(\X,\di)$ and cost $c(x,y)=\di(x,y)^2$. Our main assumptions are that the first marginal $\mu$ is \emph{densely scattered} and the underlying metric space $(\X,\di)$ is \emph{locally-uniformly non-branching}. The former replaces the usual absolute continuity assumption and asks $\mu$ to give mass to every geodesic cone, see Definitions \ref{def:cone} and \ref{def:denselyscattered}. The latter is a quantitative non-branching condition that resembles (but is weaker than) an Alexandrov-type lower curvature bound. In particular, we prove the following theorem:  

\begin{theorem}\label{thm:main2}
    Let $(\X,\di)$ be a locally-uniformly non-branching metric space and let $\mu\in \ProbTwo(\X)$ be a densely scattered measure. Then, for every $\nu\in \ProbTwo(\X)$, if $\OptPlans(\mu,\nu)\neq \emptyset$, there exists a unique optimal transport map for the problem \eqref{MongeFormulation} with $c(x,y)=\di^2(x,y)$. 
\end{theorem}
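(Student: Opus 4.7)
The plan is to prove that every optimal plan is induced by a map (which yields existence), and then deduce uniqueness by the standard convex-combination trick. Fix $\pi\in\OptPlans(\mu,\nu)$ for the cost $c=\di^2$. By classical lifting results in geodesic spaces, there exists an optimal dynamical plan $\ppi\in\OptGeo(\mu,\nu)$ with $(e_0,e_1)_\#\ppi=\pi$, whose support is a $\di^2$-cyclically monotone family of geodesics. Showing that $\pi$ is induced by a map is equivalent to showing that the disintegration $\{\pi_x\}_{x\in\X}$ of $\pi$ with respect to $\mu$ is a Dirac mass for $\mu$-a.e. $x$.

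I would argue by contradiction: suppose there is a Borel set $A\subset\X$ with $\mu(A)>0$ such that for every $x\in A$ the fiber $\pi_x$ charges at least two distinct points. By a measurable selection one obtains, for each $x\in A$, two geodesics $\gamma^1_x,\gamma^2_x\in\supp(\ppi)$ issuing from $x$ with distinct endpoints $y^1_x\neq y^2_x$. Basic non-branching rules out that $\gamma^1_x$ and $\gamma^2_x$ share a nontrivial initial segment, so two genuinely distinct transport directions emerge from each $x\in A$. Invoking the densely scattered hypothesis, the geodesic cone based at $x$ in the direction of $\gamma^1_x$ carries positive $\mu$-mass. For any source point $x'$ in this cone, the $\di^2$-cyclical monotonicity inequality applied to $(x,x')$ and their respective targets forces the transport out of $x'$ to stay geodesically close to $\gamma^1_x$. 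Running the same argument along the direction of $\gamma^2_x$ produces a positive-mass family of pairs of geodesics starting at close base points, travelling essentially parallel for a macroscopic initial segment and yet terminating at distinct endpoints --- precisely the configuration ruled out by the quantitative estimate in the locally-uniformly non-branching condition. The contradiction establishes that $\pi$ is induced by a map.

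Uniqueness then follows by the standard argument: if $T_1,T_2$ are two optimal transport maps, the plan $\tfrac12\bigl((\id,T_1)_\#\mu+(\id,T_2)_\#\mu\bigr)$ is itself an optimal plan, hence by the previous paragraph it is induced by a map, which forces $T_1=T_2$ $\mu$-almost everywhere.

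I expect the main obstacle to be the quantitative implementation of the contradiction step: converting the qualitative information \emph{``a positive $\mu$-mass of points admits two transport directions''} into a genuine violation of the locally-uniformly non-branching bound. Unlike in the $\CD(K,N)$ setting, no distortion coefficient is available to guarantee absolute continuity of intermediate measures along the transport, so all the mass needed to activate the non-branching estimate has to be extracted purely metrically from the densely scattered hypothesis and then matched against the quantitative non-branching inequality in a uniform way across $\mu$-almost every base point.
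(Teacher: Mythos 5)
Your outer skeleton (prove every optimal plan is induced by a map, then get uniqueness from the midpoint plan $\tfrac12(\pi_1+\pi_2)$) coincides with the paper's, but the core of the argument --- turning ``a positive-$\mu$-mass set of points with non-Dirac fibers'' into a violation of $\di^2$-cyclical monotonicity --- is not actually carried out, and the sketch you give of it would not work as stated. First, you misread the locally-uniformly non-branching condition: it is not a prohibition on ``pairs of geodesics starting at close base points, travelling essentially parallel and terminating at distinct endpoints''. It is a quantitative first-variation estimate, $\liminf_{t\downarrow 0}\,t^{-1}\bigl(\di(\gamma(t),z')-\di(x',z')\bigr)\ge -(1-\rho)\di(x',y')$, valid only for triples near a configuration $x,y,z$ with $y\neq z$ \emph{and} $\di(x,y)=\di(x,z)$. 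Consequently it says nothing in the regime $\di(x^*,y^*)\neq\di(x^*,z^*)$, and the paper's proof necessarily splits into two cases: when the two distances are equal the non-branching inequality is fed into an explicit estimate of $\di^2(x,y)-\di^2(\tilde x,y)$ versus $\di^2(\tilde x,z)-\di^2(x,z)$ along a point $\tilde x$ chosen in the cone $C_{\gamma,\varepsilon}$ around the transport geodesic from $x$ to $y$; when they are unequal, a separate elementary argument exploiting the strict convexity of $t\mapsto t^2$ (triangle inequality only) is used. Your proposal has no analogue of this dichotomy, and the step ``cyclical monotonicity forces the transport out of $x'$ to stay geodesically close to $\gamma^1_x$'' is asserted, not proved --- you yourself flag this as the expected obstacle, which is precisely the content of the theorem.

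Second, the preparatory reductions you rely on are either unavailable or unnecessary under the stated hypotheses. Lifting $\pi$ to a dynamical plan $\ppi\in\OptGeo(\mu,\nu)$ and ``basic non-branching'' of initial segments presuppose a geodesic (and classically non-branching) space, neither of which is assumed: geodesics are only guaranteed by Definition \ref{def:LocalUniformNonBranch} for specific nearby triples, and the paper works entirely with the static plan. The paper instead restricts $\pi$ to a compact product of balls (Lemma \ref{lem:RestrictOptimal}), extracts from the non-Dirac fibers a pair $y^*\neq z^*$ in $\supp\bigl(\int\pi_x\times\pi_x\,\de\mu\bigr)$ (Lemma \ref{lem:NotConcDiag}), and works with the sets $E_r=\{x:\pi_x(B_r(y^*)\cap\Gamma_x),\pi_x(B_r(z^*)\cap\Gamma_x)>0\}$, choosing $\mu$-density points of $E_r$ so that the densely scattered condition (cone has positive lower density at $x$) and the density-point property (density of $E_r$ at $x$ equals $1$, using that $\mu$ is doubling on its support) together produce a point $\tilde x\in C_{\gamma,\varepsilon}\cap E_r$ arbitrarily close to $x$, hence concrete pairs $(x,y),(\tilde x,z)\in\Gamma$ for which the quantitative estimates contradict cyclical monotonicity. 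Without this (or an equivalent) mechanism for intersecting the cone with the set of doubly-charging source points, your ``positive-mass family of pairs of geodesics'' never meets the cyclical monotonicity of $\Gamma$, so the contradiction is not reached. In short: the reduction and uniqueness steps are fine, but the heart of the proof is missing and the intended use of the two hypotheses is not the correct one.
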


Moreover, we prove that every probability measure, which is absolutely continuous with respect to a locally doubling measure, is densely scattered. Then, as a corollary of Theorem \ref{thm:main2}, we obtain a more classical result (Corollary \ref{cor:final}), where existence and uniqueness of optimal transport maps is proved whenever the space is locally-uniformly non-branching and the first marginal is absolutely continuous with respect to a locally doubling reference measure.

We point out that another \emph{metric Brenier theorem} was obtained by Ambrosio and Rajala in \cite{AmbRaj} under the assumption of so-called strong non-branching on the base space, and non-branching on its tangents.

The strategy we use to prove our main theorems relies on the notion of \emph{cyclical monotonity}, which is one of the most basic and important tools in the theory of optimal transport. We remark that, unlike what happens in many of the articles referred above, our argument only uses this simple notion, without appealing to the Kantorovich duality (see \cite[Lecture 3]{ABS}). Similar strategies have instead been developed in \cite{Rajala, magnabosco3,LMX}. 

We recall that a set $\Gamma\subset X\times Y$ is said to be $c$-cyclically monotone if 
\begin{equation*}
    \sum_{i=1}^{N} c\left(x_{i}, y_{\sigma(i)}\right) \geq \sum_{i=1}^{N} c\left(x_{i}, y_{i}\right)
\end{equation*}
for every $N\geq1$, every permutation $\sigma$ of $\{1,\dots,N\}$ and every $(x_i,y_i)\in \Gamma$ for $i=1,\dots,N$. The next proposition shows the strict relation  between optimality and $c$-cyclical monotonicity and will be a fundamental tool in this work.

\begin{prop}\label{prop:cmonotonicity}
Let $X$ and $Y$ be Polish spaces and $c:X\times Y \to [0,\infty]$ a lower semicontinuous cost function. Then, every optimal transport plan $\pi\in \OptPlans(\mu,\nu)$ such that $\int c \de \pi<\infty$ is concentrated on a $c$-cyclically monotone set. 
\end{prop}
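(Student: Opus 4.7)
The plan is to argue by contradiction: suppose $\pi\in\OptPlans(\mu,\nu)$ satisfies $\int c\de\pi<\infty$ but is not concentrated on any $c$-cyclically monotone Borel set, and produce an admissible plan $\tilde\pi$ of strictly smaller total cost, violating optimality.

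First, I would isolate a combinatorial witness of the failure. For $N\ge 2$ and a permutation $\sigma$ of $\{1,\dots,N\}$, consider the Borel set
\begin{equation*}
    R_N^\sigma := \Big\{ ((x_i,y_i))_{i=1}^N\in (X\times Y)^N \suchthat \textstyle\sum_{i} c(x_i,y_{\sigma(i)}) < \sum_{i} c(x_i,y_i) \Big\}.
\end{equation*}
If $\pi^{\otimes N}(R_N^\sigma)=0$ for every $N$ and every $\sigma$, then a standard inner-regularity plus Fubini iteration (taking the complements and intersecting) would yield a Borel set of full $\pi$-measure on which $c$-cyclical monotonicity holds, contradicting our assumption. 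So we may fix $N$ and $\sigma$ with $\pi^{\otimes N}(R_N^\sigma)>0$.

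Next, I would construct the competitor by cyclic mass rerouting. After an inner-regularity and Lusin-type localization step, one extracts pairwise disjoint compact sets $K_1,\dots,K_N\subset X\times Y$ with $\pi(K_i)>0$ such that the integrated inequality
\begin{equation*}
    \int_{\prod_i K_i} \Big(\textstyle\sum_{i} c(x_i,y_{\sigma(i)}) - \sum_i c(x_i,y_i)\Big) \de \pi^{\otimes N} < 0
\end{equation*}
holds with all terms finite. Setting $\alpha:=\min_i \pi(K_i)>0$, $\eta_i:=(\alpha/\pi(K_i))\,\pi\mres K_i$, $\mu_i:=(\p_X)_\#\eta_i$, $\nu_i:=(\p_Y)_\#\eta_i$, I would define
\begin{equation*}
    \tilde\pi := \pi - \sum_{i} \eta_i + \sum_{i} \frac{\mu_i\otimes \nu_{\sigma(i)}}{\alpha}.
\end{equation*}
The disjointness of the $K_i$'s guarantees $\sum_i\eta_i\le\pi$, so $\tilde\pi$ is a nonnegative Borel measure; its $X$-marginal is $\mu$ and its $Y$-marginal is $\nu$, as the $\nu_i$'s only get permuted by $\sigma$. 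Applying Fubini to the product probability $\Theta:=\bigotimes_i (\eta_i/\alpha)$ on $(X\times Y)^N$ gives
\begin{equation*}
    \int c\de\tilde\pi - \int c\de\pi = \alpha\int \Big(\textstyle\sum_{i} c(x_i,y_{\sigma(i)}) - \sum_{i} c(x_i,y_i)\Big)\de \Theta < 0,
\end{equation*}
contradicting the optimality of $\pi$.

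The hard part is the localization step. Since $c$ is only lower semicontinuous, it need not be bounded above on compact sets, the pointwise strict inequality defining $R_N^\sigma$ need not persist under small perturbations, and $R_N^\sigma$ need not contain any compact product set in a structurally obvious way. The customary remedy combines Lusin's theorem (reducing to compact sets on which $c$ is continuous, and therefore bounded), a truncation $c\wedge M$ with monotone convergence in $M$ (ensuring that the rerouted cost $\int c\de(\mu_i\otimes \nu_{\sigma(i)})$ is finite), and a Fubini-style decomposition that extracts compact product pieces of positive $\pi^{\otimes N}$-measure inside $R_N^\sigma$ on which the integrated strict inequality is preserved.
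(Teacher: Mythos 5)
The paper does not actually prove Proposition \ref{prop:cmonotonicity}; it is quoted as a classical fact (cf.\ the reference to \cite{ABS}), so the relevant question is whether your self-contained argument is sound, and it has gaps at exactly the two load-bearing steps. First, the reduction ``if $\pi^{\otimes N}(R_N^\sigma)=0$ for all $N,\sigma$ then $\pi$ is concentrated on a $c$-cyclically monotone Borel set'' is not a ``standard inner-regularity plus Fubini iteration''. Fubini only gives: for $\pi$-a.e.\ $z_1$, for $\pi$-a.e.\ $z_2$ (with the exceptional set depending on $z_1$), etc., the tuple avoids $R_N^\sigma$; it does not produce one full-measure set $\Gamma$ with $\Gamma^N\cap R_N^\sigma=\emptyset$. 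Indeed, for a general $\pi^{\otimes N}$-null Borel set such a $\Gamma$ need not exist at all (take the diagonal in a square with a nonatomic marginal), so any proof of this implication must exploit the specific structure of the sets $R_N^\sigma$; this is a genuinely nontrivial lemma (it is essentially the content of results of Beiglb\"ock--Goldstern--Maresch--Schachermayer on $c$-monotone plans), not a routine measure-theoretic remark. A standard way to bypass it entirely is the duality route: take $\varphi_k,\psi_k\in C_b$ with $\varphi_k\oplus\psi_k\le c$ and $\int(\varphi_k\oplus\psi_k)\de\pi\to\int c\de\pi$, pass to a subsequence with $c-\varphi_k\oplus\psi_k\to 0$ $\pi$-a.e., and check directly that $\Gamma:=\{c<\infty\}\cap\{c-\varphi_k\oplus\psi_k\to 0\}$ is $c$-cyclically monotone and of full measure.

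Second, the localization step you defer as ``the hard part'' is not just technical: extracting compact product pieces $K_1\times\dots\times K_N$ of positive $\pi^{\otimes N}$-measure essentially inside $R_N^\sigma$ is in general impossible for an abstract positive-measure subset of a product (a Steinhaus-type example such as $\{(s,t):s-t\notin\Q\}$ has full measure yet contains no rectangle of positive sides), and since off $R_N^\sigma$ the crossed costs $c(x_i,y_{\sigma(i)})$ are not dominated by anything $\pi^{\otimes N}$-integrable, the integrated strict inequality over a rectangle cannot be salvaged by Lusin plus truncation alone. Fortunately the rectangle structure, the disjointness of the $K_i$, and the Lusin/compactness step are all unnecessary: the standard competitor takes any Borel $A\subseteq R_N^\sigma\cap\{\sum_i c(x_i,y_i)\le M\}$ with $\pi^{\otimes N}(A)>0$ (possible because $\sum_i c(x_i,y_i)$ is $\pi^{\otimes N}$-integrable), sets $\Theta:=\pi^{\otimes N}\mres A/\pi^{\otimes N}(A)$, and defines $\tilde\pi:=\pi+\tfrac{\varepsilon}{N}\sum_i\big[((\p_{x_i},\p_{y_{\sigma(i)}})_\#\Theta)-((\p_{x_i},\p_{y_i})_\#\Theta)\big]$ for small $\varepsilon$; nonnegativity follows from $(\p_{x_i},\p_{y_i})_\#\Theta\le\pi/\pi^{\otimes N}(A)$, the marginals cancel because $\sigma$ is a permutation, the crossed cost is finite since it is dominated by $\sum_i c(x_i,y_i)\le M$ on $A$, and the cost strictly decreases. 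Finally, a minor point in your version: your Fubini identity for the cost difference is wrong at fixed points of $\sigma$ (there $\mu_i\otimes\nu_i/\alpha$ replaces the coupling $\eta_i$, which changes the cost even though the pointwise integrand vanishes); one must first reduce to fixed-point-free permutations. As it stands, then, both pillars of your argument need real repairs, either by importing the nontrivial lemma for the first step and abandoning rectangles in the second, or by switching to the duality-based construction of the monotone set.
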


\subsection*{Acknowledgments} 
S.L. acknowledges support from the Mathematical Institute (University of Oxford) through the departmental funding for summer research projects. M.M. acknowledges support from the Royal Society through the Newton International Fellowship (award number: NIF$\backslash$R1$\backslash$231659).
The authors would like to thank Tapio Rajala for valuable comments on the paper.

\section{Proof of Theorem \ref{thm:main1}}\label{sec:metrictwist}

In this section we introduce the local metric twist condition for a cost function on a metric measure space $(\X,\di, \m)$ and prove our first main result.

\begin{definition}[see Figure \ref{img: def:LMTC}]\label{def:LMTC}
In a metric measure space $(\X,\di, \m)$, we say that a cost function $c:\X\times \X\to [0,\infty]$ satisfies the Local Metric Twist Condition, in short \textbf{[LMTC]}, if for every triple $x,y,z\in \X$ with $y\neq z$, there exist $r_1,r_2>0$ such that for every $\bar{x} \in B_{r_1}(x)$,
\begin{equation*}
   \liminf _{s \to 0} \frac{\m\left(F_{s \mid r_2}(\bar{x} \mid y, z)\right)}{\m\left(B_s(\bar{x})\right)}>0,
\end{equation*}
where 
\begin{equation*}
\begin{split}
    F_{s \mid r}(x \mid y, z):=\big\{x^{\prime} \in B_s(x): c\left(x^{\prime}, y^{\prime}\right)+c\left(x, z^{\prime}\right)<c\left(x, y^{\prime}\right)&+c\left(x^{\prime}, z^{\prime}\right), \\
    &\forall y^{\prime} \in B_r(y), z^{\prime} \in B_r(z)\big\}.
    \end{split}
\end{equation*}
\end{definition}

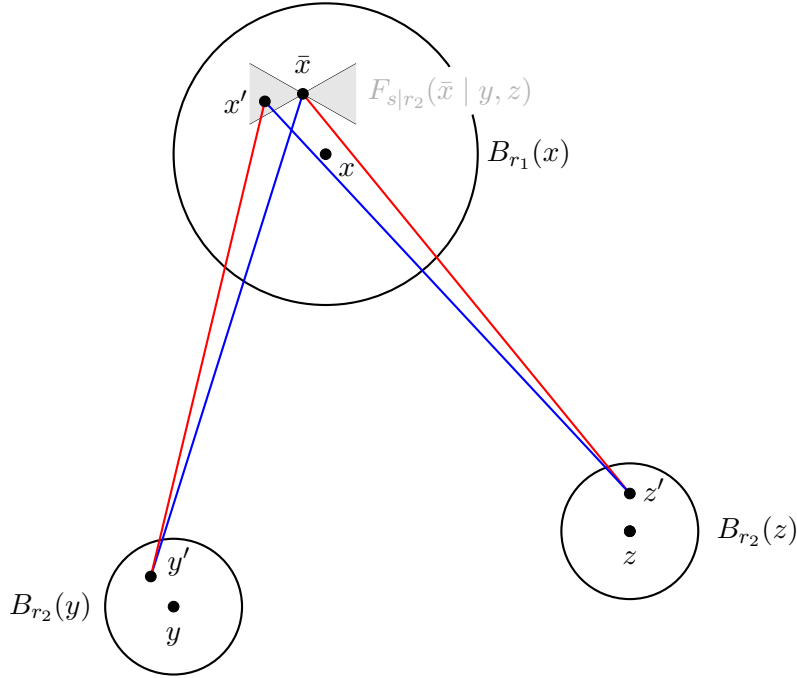
\begin{figure}[h]
    \centering
    \begin{tikzpicture}
    \filldraw (-3,-3) circle (2pt);
    \draw[thick](-3,-3) circle (0.9);
    \filldraw (3,-2) circle (2pt);
    \draw[thick](3,-2) circle (0.9);
    \filldraw (-1,3) circle (2pt);
    \draw[thick](-1,3) circle (2);
    \node at (-3,-3)[label=south:$y$] {};
    \node at (3,-2)[label=south:$z$] {};
    \node at (-1.1,2.8)[label=east:$x$] {};
    \node at (-1.3,3.8)[label=north:$\bar x$] {};
    \draw(-2,3.4)--(-0.6,4.2);
    \draw (-0.6,3.4)--(-2,4.2);
    \fill[fill=black!10!white](-2,3.4)--(-0.6,4.2)--(-0.6,3.4)--(-2,4.2)--cycle;
    \filldraw (3,-2) circle (2pt);
    \draw[blue,thick] (-3.3,-2.6)--(-1.3,3.8);
    \draw[thick, red] (3,-1.5) -- (-1.3,3.8);
    \draw[thick,blue] (3,-1.5) -- (-1.8,3.7);
    \draw[thick,red] (-3.3,-2.6) -- (-1.8,3.7);
    \filldraw (-3.3,-2.6) circle (2pt);
    \filldraw (3,-1.5) circle (2pt);
    \filldraw (-1.8,3.7) circle (2pt);
    \fill[white](0.8,3.85) circle (10pt);
    \filldraw (-1.3,3.8) circle (2pt);
    \node at (-3.35,-2.4)[label=east:$y'$] {};
    \node at (2.9,-1.45)[label=east:$z'$] {};
    \node at (-1.75,3.7)[label=west:$x'$] {};
     \node at (2.5,3)[label=west:$B_{r_1}(x)$] {};
      \node at (-3.8,-3)[label=west:$B_{r_2}(y)$] {};
       \node at (5.5,-2)[label=west:$B_{r_2}(z)$] {};
        \node at (2,3.8)[label=west:$\color{black!30!white}{F_{s \mid r_2}{(\bar x \mid y, z)}}$] {};

\end{tikzpicture}
    \caption{Definition of \textbf{[LMTC]} (Local Metric Twist Condition): as $ x' \in F_{s|r_2}(\bar x | y,z) $, the sum of the distances between points connected by a red line is smaller than the sum of the distances between points connected by a blue line, for every choice of $ y' \in B_{r_2}(y)$ and $z' \in B_{r_2}(z) $.}
    \label{img: def:LMTC}
    
\end{figure}

\begin{prop}\label{thm:PMTC(Diff) -> LMTC} 
In the metric measure space $(\X, \di, \m) = (\R^n, d_{\mathrm{Euc}}, \Leb^n)$, if a continuously differentiable (i.e. $C^1$) cost function $c:\R^n \times \R^n \to \R^+$ satisfies the twist condition \eqref{eq:twist} then it also satisfies \textbf{[LMTC]}.
\end{prop}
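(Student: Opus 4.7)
\emph{Proof plan.} My plan is to linearise the \textbf{[LMTC]} inequality via the fundamental theorem of calculus and reduce it to a half-space condition on the displacement $h := x' - \bar{x}$, then conclude by a solid-angle argument. Given $x, y, z \in \R^n$ with $y \neq z$, I would set $w(u, y', z') := \nabla_x c(u, y') - \nabla_x c(u, z')$ and $w_0 := w(x, y, z)$. The twist condition \eqref{eq:twist} at $x_0 = x$ ensures $\nabla_x c(x, y) \neq \nabla_x c(x, z)$, so $w_0 \neq 0$. For fixed $y', z'$, the defining inequality of $F_{s|r_2}$ rearranges to $f(x') < f(\bar{x})$ with $f(u) := c(u, y') - c(u, z')$, and $\nabla f(u) = w(u, y', z')$.

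Next, I would exploit the $C^1$-regularity: since $w$ is continuous, choose $r_1, r_2 > 0$ small enough that
\[
    |w(u, y', z') - w_0| < \frac{|w_0|}{4} \quad \text{for all } u \in \overline{B_{2r_1}(x)},\ y' \in \overline{B_{r_2}(y)},\ z' \in \overline{B_{r_2}(z)}.
\]
Consider the open cone $K := \{ h \neq 0 : w_0 \cdot h < -|w_0|\,|h|/2 \}$, a nontrivial spherical cap pointing opposite to $w_0$. For $\bar{x} \in B_{r_1}(x)$ and $h \in K$ with $|h| \leq r_1$, the segment $[\bar{x}, \bar{x}+h]$ lies in $\overline{B_{2r_1}(x)}$, so
\[
    w(\bar{x} + th, y', z') \cdot h \leq w_0 \cdot h + \frac{|w_0|}{4}|h| < -\frac{|w_0|}{4}|h|
\]
uniformly in $t \in [0,1]$ and in $y' \in B_{r_2}(y)$, $z' \in B_{r_2}(z)$. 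Integrating in $t$ then gives $f(\bar{x}+h) - f(\bar{x}) < 0$, so $\bar{x} + h \in F_{s|r_2}(\bar{x}|y,z)$ whenever $s \geq |h|$ and $s \leq r_1$.

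Consequently $F_{s|r_2}(\bar{x}|y,z) \supset \bar{x} + (K \cap B_s(0))$ for every $0 < s \leq r_1$. Since $K$ is a cone,
\[
    \frac{\Leb^n(K \cap B_s(0))}{\Leb^n(B_s(0))} = \frac{\Leb^n(K \cap B_1(0))}{\Leb^n(B_1(0))} =: \kappa > 0
\]
is independent of $s$ and of $\bar{x}$, so the required $\liminf$ is bounded below by $\kappa$, establishing \textbf{[LMTC]}. The only delicate point I foresee is obtaining simultaneous uniformity in the triple $(\bar{x}, y', z')$, which I would handle by continuity of $\nabla_x c$ on the compact product $\overline{B_{2r_1}(x)} \times \overline{B_{r_2}(y)} \times \overline{B_{r_2}(z)}$; everything else is a direct first-order Taylor estimate.
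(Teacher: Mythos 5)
Your argument is correct and is essentially the paper's own proof: both use continuity of $\nabla_x c$ to get a uniformly negative directional derivative of $u\mapsto c(u,y')-c(u,z')$ on a cone of directions opposite to $w_0$, integrate along segments via the fundamental theorem of calculus to place a scaled cone inside $F_{s\mid r_2}(\bar{x}\mid y,z)$, and conclude by the scale-invariant volume fraction of that cone. Your explicit choice of the cone $K$ and the constant $\lvert w_0\rvert/4$ is just a cleaner bookkeeping of the paper's $V\subseteq S^{n-1}$ and $\delta$.
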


\begin{proof}
Fix $x,y,z\in \X$ with $y\neq z$. Since $c$ satisfies the twist condition, $w:=\nabla_x c(x, y)-\nabla_x c(x, z) \neq 0$. Now, as $c$ is $C^1$, the map $(x, y, z) \mapsto \nabla_x c(x, y)-\nabla_x c(x, z)$ is continuous, in particular there exist $r, r_2 > 0$ such that for all $ \bar{x} \in B_{r}(x), y^{\prime} \in B_{r_2}(y), z^{\prime} \in B_{r_2}(z)$ (in terms of Euclidean distance), we have $$\left\|\nabla_x c\left(\bar{x}, y^{\prime}\right)-\nabla_x c\left(\bar{x}, z^{\prime}\right)-w\right\|<\frac{\|w\|}{10 n^2}.$$ 
Therefore there exist $\delta > 0$ and $V \subseteq S^{n-1}$  such that the cone $ C_V:=\{\lambda v, 0 \leq \lambda \leq 1, v \in V\}$ spanned by $V$ has positive fraction of volume $\frac{\Leb^n\left(C_V\right)}{\Leb^n\left(B_1(0)\right)}:=L>0$ and $$\left\langle\nabla_x c\left(\bar{x}, y^{\prime}\right)-\nabla_x c\left(\bar{x}, z^{\prime}\right), v\right\rangle<-\delta<0, $$for all $\bar{x} \in B_{r_1}(x),  y^{\prime} \in B_{r_2}(y),  z^{\prime} \in B_{r_2}(z), v\in V$.

Taking $r_1 = r/2$, we claim that for all $\bar{x} \in B_{r_1}(x)$, $$\liminf _{s \to 0} \frac{\Leb^n\left(F_{s \mid r_2}(\bar{x} \mid y, z)\right)}{\Leb^n \left(B_s(\bar{x})\right)}>0, $$ this shall conclude the theorem. 
Indeed, fix any $\bar{x} \in B_{r_1}(x) = B_{r/2}(x)$, then for all $v\in V$ and $t < r/2$, $\bar x + tv \in B_r(x)$. Since $c$ is continuously differentiable, we deduce that for every $s<r/2$ 
\begin{equation*}
    \begin{aligned}
 c\left(\bar{x}+s v, y^{\prime}\right)&-c\left(\bar{x}+s v, z^{\prime}\right)-c\left(\bar{x}, y^{\prime}\right)+c\left(\bar{x}, z^{\prime}\right) \\
& =\int_0^s\left\langle\nabla_x\left(\bar{x}+t v, y^{\prime}\right)-\nabla_x\left(\bar{x}+t v, z^{\prime}\right), v\right\rangle \mathrm{d} t <\int_0^s-\delta \, \mathrm{d}t =-s \delta<0,
\end{aligned}
\end{equation*}
for all $y^{\prime} \in B_{r_2}(y), z^{\prime} \in B_{r_2}(z), v \in V$. In particular, we deduce that for each $s<r/2, v \in V$, $\bar{x}+s v \in F_{s \mid r_2}(\bar{x} \mid y, z)$. Hence, for all $s<r/2$, we have $$\frac{\Leb^n \left(F_{s \mid r_2}(\bar{x} \mid y, z)\right)}{\Leb^n \left(B_s(\bar{x})\right)} \geq \frac{\Leb^n \left(C_V\right)}{\Leb^n \left(B_1(0)\right)}=L>0,$$ and the result follows by passing to the $\liminf$ as $s\to 0$.
\end{proof}

Proposition \ref{thm:PMTC(Diff) -> LMTC} shows that the notion of \textbf{[LMTC]} is a non-vacuous and consistent extension of the Riemannian twist condition \eqref{eq:twist} to over more general metric measure spaces. Moreover, notice that the constant $L$ in proof of Proposition \ref{thm:PMTC(Diff) -> LMTC} can be chosen uniformly in the neighborhood $B_{r_1}(x)$, this shows that, in principle, \textbf{[LMTC]} is a weaker requirement than the classical twist condition. Nevertheless, \textbf{[LMTC]} is already sufficient for claiming the existence of a unique optimal transport map, as will be proved in Theorem \ref{thm: LMTC -> !OTM}.

We now introduce a pointwise and thus weaker version of \textbf{[LMTC]} and a regularity condition which, when combined together, imply \textbf{[LMTC]}.

\begin{definition}\label{def:PMTC(Met)}
In a metric measure space $(\X,\di, \m)$, we say that a cost function $c:\X\times \X \to [0,\infty]$ satisfies the Pointwise Metric Twist Condition, in short \textbf{[PMTC]}, if for every triple $x,y,z\in \X$ with $y\neq z$, there exists $\varepsilon>0$ such that 
\begin{equation*}
   \liminf _{s \to 0} \frac{\mathfrak{m}\left(E_s^{\varepsilon}({x} \mid y, z)\right)}{\mathfrak{m}\left(B_s({x})\right)}>0
\end{equation*}
where
$$E_s^{\varepsilon}(x \mid y, z):=\left\{x^{\prime} \in B_s(x): c\left(x^{\prime}, y\right)+c(x, z)<c(x, y)+c\left(x^{\prime}, z\right)-\varepsilon d\left(x^{\prime}, x\right)\right\}.$$
\end{definition}

\begin{remark}\label{rmk:PMTC(Met)}
It is helpful to view the inequality in the definition of $E_s^{\varepsilon}(x \mid y, z)$ as an estimate for the difference in ``gradients" $$\frac{c(x',y)-c(x,y)}{\di(x',x)} <  \frac{c(x',z)-c(x,z)}{\di(x',x)} - \varepsilon. $$ From this perspective, \textbf{[PMTC]} resembles the twist condition \eqref{eq:twist} and essentially requires the ``gradients" with respect to the $x$ coordinate of $c$ at $(x,y)$ and at $(x,z)$ to differ by some tolerance level $\varepsilon$. Since gradients are not readily-defined on generic metric measure spaces, this requirement is phrased in the sense that the set of $x'$ satisfying the above inequality exist densely around $x$.
\end{remark}

\begin{definition}\label{def:C1(Met)}
In a metric measure space $(\X,\di, \m)$, a cost $c:\X\times \X\to \R^+$ is \textbf{[C1]} if 
\begin{itemize}
    \item for every pair $y,z\in \X$, and for every $\varepsilon>0$, the function 
\begin{equation*}  x \mapsto 
\liminf _{s \to 0} \frac{\mathfrak{m}\left(E_s^{\varepsilon}(x \mid y, z)\right)}{\mathfrak{m}\left(B_s(x) \right)}
\end{equation*}
is continuous.

\item  for every pair $x,y\in \X$ and $\varepsilon>0$, there exists radii $r_1,r_2>0$ and $\delta>0$ such that for all $\bar{x}\in B_{r_1}(x), \bar{y} \in B_{r_2}(y)$ we have
\begin{equation*} 
    \left|\left(c\left(x^{\prime}, \bar{y}\right)-c(\bar{x}, \bar{y})\right)-\left(c\left(x^{\prime}, y\right)-c(\bar{x}, y)\right)\right|<\varepsilon \di\left(x^{\prime}, \bar{x}\right), \quad \forall x'\in B_\delta(\bar x).
\end{equation*}

\end{itemize}

\end{definition}
\begin{remark}
    The first requirement in Definition \ref{def:C1(Met)}, when viewed as in Remark \ref{rmk:PMTC(Met)}, imposes a uniform tolerance level $\varepsilon$ on the ``description" of gradients on a neighbourhood of $x$, virtually asking for continuity of $c$'s ``gradient" in its first argument. By a similar manner, the second condition additionally claims the continuity of $c$'s ``gradient" with respect to its both arguments. Indeed, when $c: \R^n \times \R^n \to \R^+$ is continuously differentiable, the first condition of \textbf{[C1]} is readily checked, while the second condition essentially asks: for all $x,y\in \R^N $ there exists $ r_1,r_2>0$ such that for all $\bar{y} \in B_{r_2}(y), \bar{x} \in B_{r_1}(x)$, there is $\left|\nabla_x c(\bar{x}, \bar{y})-\nabla_x c(\bar{x}, y)\right|<\varepsilon$, which is also immediate from the continuity of $c$'s gradient.
\end{remark}

\begin{prop}\label{thm:PMTC(Met) + C1(Met) = LMTC}
    On a metric measure space $(\X, \di, \m)$, if cost function $c:\X \times \X \to \R^+$ satisfies \textbf{[PMTC]} and is \textbf{[C1]}, then it satisfies \textbf{[LMTC]}. 
\end{prop}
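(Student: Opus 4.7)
The plan is to combine \textbf{[PMTC]} at the base point $x$ with the two clauses of \textbf{[C1]} as follows. Fix $x,y,z\in\X$ with $y\neq z$. By \textbf{[PMTC]} there is some $\varepsilon_0>0$ so that the pointwise density of $E_s^{\varepsilon_0}(x\mid y,z)$ in $B_s(x)$ is strictly positive in the liminf. I would then set $\varepsilon:=\varepsilon_0/3$; since the sets $E_s^{\varepsilon'}$ grow as $\varepsilon'$ decreases (the defining inequality becomes easier to satisfy), the liminf stays positive with this softer tolerance. The first clause of \textbf{[C1]} says that $\bar x\mapsto \liminf_{s\to 0}\m(E_s^{\varepsilon}(\bar x\mid y,z))/\m(B_s(\bar x))$ is continuous, hence strictly positive on some ball $B_\rho(x)$.

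Next I would invoke the second clause of \textbf{[C1]} with tolerance $\varepsilon/3$, applied separately to the pairs $(x,y)$ and $(x,z)$. This yields radii $r_1^y,r_2^y,\delta^y$ and $r_1^z,r_2^z,\delta^z$ controlling, uniformly in $\bar x$, the perturbation of the "first-variable increments" of $c$ when the second argument is moved inside $B_{r_2^y}(y)$ or $B_{r_2^z}(z)$. Setting
\[
r_1:=\min(\rho,r_1^y,r_1^z),\qquad r_2:=\min(r_2^y,r_2^z),\qquad \delta:=\min(\delta^y,\delta^z),
\]
the key claim is the set inclusion
\[
E_s^{\varepsilon}(\bar x\mid y,z)\subset F_{s\mid r_2}(\bar x\mid y,z),\qquad \text{for every } \bar x\in B_{r_1}(x),\ s<\delta.
\]

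To prove this inclusion, take $x'\in E_s^{\varepsilon}(\bar x\mid y,z)$ and arbitrary $y'\in B_{r_2}(y)$, $z'\in B_{r_2}(z)$. Rearranging the defining inequality of $E_s^\varepsilon$ gives $c(x',y)-c(\bar x,y)<c(x',z)-c(\bar x,z)-\varepsilon\,\di(x',\bar x)$. Using the second clause of \textbf{[C1]} once to replace $y$ by $y'$ (losing at most $(\varepsilon/3)\di(x',\bar x)$) and once to replace $z$ by $z'$ (losing at most another $(\varepsilon/3)\di(x',\bar x)$), the combined slack consumed is $(2\varepsilon/3)\di(x',\bar x)$, strictly less than $\varepsilon\,\di(x',\bar x)$, so the strict inequality
\[
c(x',y')-c(\bar x,y')<c(x',z')-c(\bar x,z')
\]
survives. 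The only subtle point is that strictness requires $x'\neq \bar x$, but the inequality defining $E_s^{\varepsilon}$ fails at $x'=\bar x$, so $\di(x',\bar x)>0$ automatically.

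Passing to densities, the established inclusion gives $\m(F_{s\mid r_2}(\bar x\mid y,z))\ge\m(E_s^{\varepsilon}(\bar x\mid y,z))$ for every $\bar x\in B_{r_1}(x)$ and every sufficiently small $s$, and combining with the lower bound from the first clause of \textbf{[C1]} yields
\[
\liminf_{s\to 0}\frac{\m(F_{s\mid r_2}(\bar x\mid y,z))}{\m(B_s(\bar x))}>0,
\]
which is exactly \textbf{[LMTC]}. The only real difficulty is the bookkeeping on the tolerances: the split $\varepsilon_0=3\varepsilon$ together with the choice $\varepsilon/3$ in the second clause of \textbf{[C1]} is what guarantees that the slack spent in replacing $(y,z)$ by $(y',z')$ stays strictly below the margin produced by \textbf{[PMTC]}, so strict inequality is preserved throughout.
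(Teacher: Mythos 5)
Your proof is correct and follows essentially the same route as the paper's: \textbf{[PMTC]} at $x$, the continuity clause of \textbf{[C1]} to make the density of $E_s^{\varepsilon}(\bar x\mid y,z)$ uniformly positive for $\bar x$ near $x$, and the second clause of \textbf{[C1]} with tolerance $\varepsilon/3$ in each of the two second arguments to obtain the inclusion $E_s^{\varepsilon}(\bar x\mid y,z)\subseteq F_{s\mid r_2}(\bar x\mid y,z)$ for small $s$, whence the liminf bound. The only cosmetic difference is your preliminary replacement of $\varepsilon_0$ by $\varepsilon_0/3$, which is harmless but unnecessary since the same slack accounting works with $\varepsilon_0$ itself.
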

\begin{proof}
    We need to prove the statement: for any triplet $x,y,z$ there exists $r_1,r_2>0$ so that, 
    \begin{equation}\label{eq:uniform positivity}
    \forall \bar{x} \in B_{r_1}(x), \quad \quad \liminf _{s \to 0} \frac{\mathfrak{m}\left(F_{s \mid r_2}(\bar{x} \mid y, z)\right)}{\mathfrak{m}\left(B_s(\bar{x})\right)}>0. 
    \end{equation}
    To this aim, fix $x,y,z \in \X$. Since $c$ satisfies \textbf{[PMTC]}, there exist some $\varepsilon > 0$ such that $\liminf _{s \rightarrow 0} \frac{\mathfrak{m}\left(E_s^{\varepsilon}(x \mid y, z)\right)}{\mathfrak{m}\left(B_s(x)\right)}>0$. 
    By continuity of $x \mapsto \liminf _{s \rightarrow 0} \frac{\mathfrak{m}\left(E_s^{\varepsilon}(x \mid y, z)\right)}{\mathfrak{m}\left(B_s(x)\right.}$, there exist $r_0>0$ so that for all $\bar{x} \in B_{r_0}(x)$, $$\liminf _{s \to 0} \frac{\mathfrak{m}\left(E_s^{\varepsilon}(\bar{x} \mid y, z)\right)}{\mathfrak{m}\left(B_s(\bar{x})\right)}>0.$$

    By the second condition of \textbf{[C1]}, there exist some $\delta_1>0$ and some radii $r_{11}, r_{12}>0$ such that for all $\bar{x}\in B_{r_{11}}(x), \bar{y} \in B_{r_{12}}(y)$, as long as $\di(x', \bar{x} )< \delta_1$, 
    \begin{equation} \label{eq:xy-twist}
    \left|\left(c\left(x^{\prime}, \bar{y}\right)+c(\bar{x}, y)\right)-\left(c\left(x^{\prime}, y\right)+c(\bar{x}, \bar{y})\right)\right|<\frac{\varepsilon}{3} \di \left(x^{\prime}, \bar{x}\right),
    \end{equation}
  Similarly, there exist some $\delta_2>0$ and some radii $r_{21}, r_{22}>0$ such that for all $\bar{x}\in B_{r_{21}}(x), \bar{z} \in B_{r_{22}}(z)$, as long as $\di (x', \bar{x} )< \delta_2$, 
\begin{equation}\label{eq:xz-twist}
     \left|\left(c\left(x^{\prime}, \bar{z}\right)+c(\bar{x}, z)\right)-\left(c\left(x^{\prime}, z\right)+c(\bar{x}, \bar{z})\right)\right|<\frac{\varepsilon}{3} \di \left(x^{\prime}, \bar{x}\right).
\end{equation}
Take $r_1:=r_{11} \wedge r_{21} \wedge r_0$, $r_2=r_{12} \wedge r_{22}$ and $\delta:= \delta_1 \wedge \delta_2$, then both \eqref{eq:xy-twist} and \eqref{eq:xz-twist} hold for all $\bar{x} \in B_{r_1}(x), \bar{y} \in B_{r_2}(y), \bar{z} \in B_{r_2}(z)$, whenever $\di (x', \bar{x} )< \delta$. It remains to validate statement \eqref{eq:uniform positivity} with these selections of $r_1$ and $r_2$. 

Fix an arbitrary $\bar{x}\in B_{r_1}(x)$, then $\bar{x}\in B_{r_0}(x)$, and thus $\liminf _{s \to 0} \frac{\mathfrak{m}\left(E_s^{\varepsilon}(\bar{x} \mid y, z)\right)}{\mathfrak{m}\left(B_s(\bar{x})\right)}>0$.
For any $s< \delta$ and $x^{\prime} \in E_s^{\varepsilon}(\bar{x} \mid y, z)$, for all $\bar{y} \in B_{r_2}(y), \bar{z} \in B_{r_2}(z)$, we have 

$$\begin{aligned}
 c\left(x^{\prime}, \bar{y}\right)+c(\bar{x}, \bar{z})-c(\bar{x}, \bar{y})&-c\left(x^{\prime}, \bar{z}\right) \\
&= \; {\left[\left(c\left(x^{\prime}, \bar{y}\right)+c(\bar{x}, y)\right)-\left(c\left(x^{\prime}, y\right)+c(\bar{x}, \bar{y})\right)\right] } \\
&\quad  -\left[\left(c\left(x^{\prime}, \bar{z}\right)+c(\bar{x}, z)\right)-\left(c\left(x^{\prime}, z\right)+c(\bar{x}, \bar{z})\right)\right] \\
&\quad  -\left[\left(c(\bar{x}, y)+c\left(x^{\prime}, z\right)\right)-\left(c\left(x^{\prime}, y\right)+c(\bar{x}, z)\right)\right] \\
&:=   \mathrm{Cost}_{xy} - \mathrm{Cost}_{xz} - \mathrm{Cost}_{xyz} <  0,
\end{aligned}$$
because $|\mathrm{Cost}_{xy}|,|\mathrm{Cost}_{xz}|<\frac{\varepsilon}{3} \di \left(x^{\prime}, \bar{x}\right)$ and $\mathrm{Cost}_{xyz}>\varepsilon \di \left(x^{\prime}, \bar{x}\right)$. Therefore, we have just shown that $x^{\prime} \in F_{s \mid r_2}(\bar{x} \mid y, z)$.
That is to say, whenever $s<\delta$, $E_s^{\varepsilon}(\bar{x} \mid y, z) \subseteq F_{s \mid r_2}(\bar{x} \mid y, z)$, and hence 
$$\liminf _{s \to 0} \frac{\mathfrak{m}\left(F_{s \mid r_2}(\bar{x} \mid y, z)\right)}{\mathfrak{m}\left(B_s(\bar{x})\right)} \geq \liminf _{s \to 0} \frac{\mathfrak{m}\left(E_s^{\varepsilon}(\bar{x} \mid y, z)\right)}{\mathfrak{m}\left(B_s(\bar{x})\right)}>0.$$
\end{proof}

Before proceeding to prove that \textbf{[LMTC]} implies the existence of unique transport maps in Theorem \ref{thm: LMTC -> !OTM}, it is helpful to prove two lemmas of use.

\begin{lemma}\label{lem:NotConcDiag}
    Suppose $\pi\in \OptPlans(\mu, \nu)$ is not induced by a map and let $(\pi_x)$ be the disintegration kernel of $\pi$ via projection onto the the first component (that is $\pi = \int \pi_x \,\mathrm{d}\mu(x)$). Define $\eta=\int \pi_x \times \pi_x d \mu(x)$. Then $\eta$ is a probability measure not concentrated on the diagonal $\Delta:=\{(x, x), x \in \X\}$.
\end{lemma}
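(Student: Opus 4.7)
The plan is to argue by contraposition: assuming $\eta(\Delta)=1$, I would derive that $\pi$ is induced by a Borel map, contradicting the hypothesis that $\pi$ is not map-induced. That $\eta$ is a probability measure is immediate since each $\pi_x \times \pi_x$ has total mass one and $\mu(\X)=1$; the only thing to check is the measurability of $x \mapsto (\pi_x \times \pi_x)(E)$ for $E$ Borel, which follows from the weak measurability of the disintegration kernel together with a standard monotone class argument.

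For the main step, I would unfold the assumption $\eta(\Delta)=1$ using the definition of $\eta$ and Fubini to obtain
\[
1 \;=\; \int_\X (\pi_x \times \pi_x)(\Delta) \, d\mu(x) \;=\; \int_\X \int_\X \pi_x(\{y\}) \, d\pi_x(y) \, d\mu(x).
\]
Since $0 \le \pi_x(\{y\}) \le 1$ pointwise and $\pi_x$ is a probability measure, the inner integral equalling one forces, for $\mu$-a.e. $x$, the identity $\pi_x(\{y\}) = 1$ to hold for $\pi_x$-a.e. $y$. But a probability measure can admit at most one atom of full mass, so for $\mu$-a.e. $x$ there is a unique $T(x) \in \X$ with $\pi_x = \delta_{T(x)}$. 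The disintegration formula then yields
\[
\pi(A\times B) \;=\; \int_A \pi_x(B)\, d\mu(x) \;=\; \mu(A \cap T^{-1}(B)) \;=\; ((\id,T)_\# \mu)(A\times B),
\]
so $\pi$ is induced by $T$, contradicting the hypothesis.

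The only genuinely technical point is the Borel measurability of $T$, defined on the full-$\mu$-measure set as the unique point where $\pi_x$ concentrates, and arbitrarily elsewhere. This follows from the weak measurability of the disintegration kernel: $x\mapsto \pi_x(B)$ is Borel for every Borel $B\subseteq \X$, and $T^{-1}(B)$ agrees, up to a $\mu$-null set, with the Borel set $\{x : \pi_x(B) = 1\}$. Apart from this standard measurable-selection-type issue, the argument is a purely measure-theoretic unpacking of what it means for the ``fibrewise self-product'' mass to sit on the diagonal; neither the cost function, the twist condition, nor the optimality of $\pi$ plays a role in this lemma.
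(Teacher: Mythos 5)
Your proof is correct, but it runs in the contrapositive direction compared with the paper. The paper argues directly: since $\pi$ is not induced by a map, the set $A=\{x:\pi_x\text{ is not a Dirac mass}\}$ has $\mu(A)>0$, and for each $x\in A$ one has $(\pi_x\times\pi_x)(\Delta)<1$; integrating over $A$ and $A^c$ gives $\eta(\Delta)<1$ at once. You instead assume $\eta(\Delta)=1$, use Fubini to force $(\pi_x\times\pi_x)(\Delta)=1$, hence $\pi_x=\delta_{T(x)}$, for $\mu$-a.e.\ $x$, and then reconstruct a $\mu$-measurable map $T$ inducing $\pi$, contradicting the hypothesis. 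Both arguments hinge on the same elementary fact, namely that $(\pi_x\times\pi_x)(\Delta)=1$ if and only if $\pi_x$ is a Dirac mass, so logically they are mirror images; the difference is that your route has to pay for the construction of $T$ (uniqueness of the full-mass atom plus the measurability argument via $\{x:\pi_x(B)=1\}$), which you handle correctly, while the paper's direct estimate avoids any selection issue and is shorter. What your version buys is that it makes fully explicit the standard equivalence ``$\pi$ induced by a map $\iff$ $\pi_x$ is a Dirac for $\mu$-a.e.\ $x$'', one direction of which the paper's opening sentence quietly relies on.
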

\begin{proof}
By standard arguments, it is readily checked that $\eta$ is well-defined.
As $\pi$ is not induced by a transport map, there exist $A\subset \X$ with $\mu(A)>0$ such that for each $x\in A$, $\pi_x$ is not a $\delta$-measure and hence $\pi_x \times \pi_x(\Delta)<1$. It is easy to check that $\eta(\X \times \X) = 1$, while
$$\begin{aligned} \eta(\Delta):=
 \int_A \pi_x \times \pi_x(\Delta) \de \mu(x)+\int_{A^c} \pi_x \times \pi_x(\Delta) \de \mu(x)
< \mu(A)+\mu\left(A^c\right) 
= 1
\end{aligned},$$
showing that $\eta$ is not concentrated on $\Delta$.
\end{proof}

\begin{lemma}\label{lem:RestrictOptimal}
Let $c:\X\times \X\rightarrow \R^+$ be a lower-semicontinuous cost, suppose $\mu \ll \m$ and let $\pi \in \OptPlans(\mu, \nu)$. Then, the following hold.
\begin{itemize}

\item Fix any $\hat{x}\in \X$ and $r>0$ such that $\pi(\bar{B}_r(\hat{x}) \times \bar{B}_r(\hat{x}))>0$. Let  $\hat{\pi}:=\frac{\pi \llcorner (\bar{B}_r(\hat{x}) \times \bar{B}_r(\hat{x}))}{\pi(\bar{B}_r(\hat{x}) \times \bar{B}_r(\hat{x}))}$ and let $
 \hat{\mu}:={\p_1}_\# \hat{\pi}, 
\hat{v}:={\p_2}_\# \hat{\pi}
$ be the pushforward measures by the coordinate projections. Then $\hat{\pi} \in \OptPlans(\hat{\mu}, \hat{\nu})$.

\item If $\pi$ is not induced by a map, then one can find a $\hat{x} \in \X$ and $r>0$, so that the restriction $\hat{\pi}$ is not induced by a map.

\end{itemize}
\end{lemma}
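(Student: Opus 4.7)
For the first bullet, my plan is a standard swap/contradiction argument. Set $A := \bar B_r(\hat x)\times \bar B_r(\hat x)$ and $\alpha := \pi(A) > 0$, so that $\pi = \alpha\hat\pi + \pi\llcorner A^c$. If $\hat\pi$ were not in $\OptPlans(\hat\mu,\hat\nu)$, some competitor $\tilde\pi \in \mathsf{Adm}(\hat\mu,\hat\nu)$ would satisfy $\int c \de \tilde\pi < \int c \de \hat\pi$. Then the candidate $\pi' := \alpha\tilde\pi + \pi\llcorner A^c$ would inherit the marginals of $\pi$ (by linearity, since $\alpha\tilde\pi$ and $\alpha\hat\pi$ share marginals $\alpha\hat\mu$ and $\alpha\hat\nu$) and would have strictly smaller total cost than $\pi$, contradicting $\pi\in\OptPlans(\mu,\nu)$. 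This step is entirely routine; the only thing worth checking is that $\pi\llcorner A^c$ is a non-negative measure, which is immediate.

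For the second bullet, the plan is to use separability of $\X$ to confine the non-Dirac behaviour of the conditional plans to a bounded region. First I would disintegrate $\pi = \int \pi_x \de\mu(x)$; by hypothesis the Borel set $B := \{x : \pi_x \text{ is not a Dirac}\}$ has $\mu(B) > 0$. Next, fixing a countable basis $\{B_k\}_{k\in\setN}$ of $\X$, I would define, for every pair $(j,k)$ with $B_j\cap B_k = \emptyset$,
\[
A_{j,k} := \{x\in \X : \pi_x(B_j)>0 \text{ and } \pi_x(B_k)>0\}.
\]
Since $\X$ is a metric space, any non-Dirac Borel probability measure charges two disjoint basic open neighbourhoods of two distinct points of its support, so $B \subseteq \bigcup_{j,k} A_{j,k}$; by countable subadditivity some pair satisfies $\mu(A_{j,k}) > 0$. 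Intersecting further with $B_N(x_0)$ for $N$ large enough, I obtain a bounded subset $A'\subseteq A_{j,k}$ with $\mu(A')>0$.

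It then remains to pick any $\hat x$ and $r>0$ large enough that $A'\cup B_j\cup B_k \subseteq \bar B_r(\hat x)$, and to verify that the resulting $\hat\pi$ is not induced by a map. The disintegration of $\hat\pi$ along $\hat\mu$ is given by $\hat\pi_x = \pi_x\llcorner \bar B_r(\hat x) / \pi_x(\bar B_r(\hat x))$, and $\hat\mu$ is a positive multiple of $\pi_x(\bar B_r(\hat x))\cdot(\mu\llcorner \bar B_r(\hat x))$. For every $x\in A'\subseteq \bar B_r(\hat x)$, both $B_j$ and $B_k$ lie in $\bar B_r(\hat x)$ and carry positive $\pi_x$-mass, so $\hat\pi_x$ fails to be a Dirac, while the same positivity forces $\hat\mu(A')>0$. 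Hence $\hat\pi$ is not induced by a map.

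The main delicacy I expect is the measurability bookkeeping for $B$ and the $A_{j,k}$, but this is standard: $x\mapsto \pi_x(U)$ is Borel for every open $U$ by the disintegration theorem on Polish spaces, and ``$\pi_x$ is a Dirac'' can be expressed as the countable conjunction $\pi_x(B_j)\pi_x(B_k)=0$ over all disjoint basic pairs.
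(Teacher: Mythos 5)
Your argument for the first bullet is the same as the paper's: your competitor $\pi' = \alpha\tilde\pi + \pi\llcorner A^c$ is exactly the paper's $\pi^* = \pi + \pi(\bar B_r(\hat x)\times\bar B_r(\hat x))(\tilde\pi-\hat\pi)$, and the marginal/cost bookkeeping is identical, so nothing to add there. For the second bullet you take a genuinely different (more explicit) route. The paper fixes an arbitrary centre $\hat x$, sets $A_r:=\{x\in\bar B_r(\hat x):(\pi\llcorner(\bar B_r(\hat x)\times\bar B_r(\hat x)))_x \text{ not a Dirac}\}$, and simply observes $A_r\nearrow A:=\{x:\pi_x\text{ not a Dirac}\}$ with $\mu(A)>0$, so monotone convergence produces a radius $r$ with $\mu(A_r)>0$; the verification that this forces $\hat\mu(A_r)>0$ and that the restricted conditionals fail to be Diracs is left implicit. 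You instead localise the non-Dirac behaviour via a countable basis: countable subadditivity over disjoint basic pairs $(B_j,B_k)$ isolates a single pair charged by $\pi_x$ for all $x$ in a set of positive $\mu$-measure, and then one sufficiently large closed ball does the job. Both are exhaustion arguments and both are correct; yours buys a fully explicit check that $\hat\pi_x$ charges two disjoint sets and that $\hat\mu$ gives mass to the bad set (via the density $\frac1\alpha\pi_x(\bar B_r(\hat x))$ of $\hat\mu$ with respect to $\mu$ on the ball), at the price of slightly more machinery, while the paper's one-line monotonicity argument is terser. One small point to tighten in your write-up: to guarantee $A'\cup B_j\cup B_k\subseteq\bar B_r(\hat x)$ you need the basic sets to be bounded, so fix the basis to be, say, balls of rational radius centred at a countable dense set (or note that it suffices that $B_j,B_k$ be eventually contained in large balls); with a generic countable basis this step as stated does not quite go through, though the fix is immediate.
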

\begin{proof}

For the first claim, assume $\hat{\pi} \notin \OptPlans(\hat{\mu}, \hat{\nu})$. Then there exist $\tilde{\pi} \in \mathsf{Adm}(\hat{\mu}, \hat{\nu})$ non-negative and with the same marginals as $\hat{\pi}$, but with $\int c \de \tilde{\pi}<\int c \de \hat{\pi}$. Define $\pi^*:=\pi+ \pi(\bar{B}_r(\hat{x}) \times \bar{B}_r(\hat{x}))( \tilde{\pi}- \hat{\pi})$. Now, since $\hat{\pi}$ and $\tilde{\pi}$ have the same marginals, we have that 
$$\begin{aligned}
{\p_1} _\# \pi^*={\p_1}_\#\pi=\mu \quad \text{and} \quad {\p_2} _\# \pi^*={\p_2}_\#\pi=\nu,
\end{aligned}$$
and therefore $\pi^* \in \mathsf{Adm}(\mu, \nu)$. However, 
$$\begin{aligned}
\int c \de \pi^* =\int c \de \pi  + \pi(\bar{B}_r(\hat{x}) \times \bar{B}_r(\hat{x})) \int c \de(\tilde\pi-\hat{\pi}) <\int c \de \pi,
\end{aligned}$$
which contradicts the optimality of $\pi$.

For the second claim, call $\widehat{\pi}_r:=\pi \llcorner\left(\overline{B_r}(\hat{x}) \times \overline{B_r}(\hat{x})\right)$. Let $A:=\left\{x: \pi_x \text { not a } \delta \text {-measure}\right\}$ and  $A_r:=\left\{x \in \overline{B_r}(\hat{x}):\left(\widehat{\pi_r}\right)_x \text { not a } \delta \text {-measure}\right\}$. Noticing that $A_r\nearrow A  \text{ and } \mu(A)>0$ allows to prove the claim.
\end{proof}

Lemma \ref{lem:RestrictOptimal} allows us to restrict attention to optimal plans with compact support in proper spaces. In particular, assuming a plan is not induced by a map, one may consider the restriction of the optimal plan to some compact subset of choice, which remains an optimal plan, and derive a contradiction for the restricted plan.

\begin{theorem}\label{thm: LMTC -> !OTM}
Let $(\X,\di, \m)$ be a proper metric measure space with $\m$ doubling on $\X$. Let $c:\X\times \X\to [0,\infty]$ be any lower-semi-continuous cost function that satisfies \textbf{[LMTC]}. Then for every pair of measures $\mu,\nu\in \Prob(\X)$ with $\mu \ll \m$ and such that $\OptPlans(\mu,\nu) \neq \emptyset $,\footnote{This is just to avoid the marginal (trivial) case of having no admissable plans of finite cost.} there exists a unique optimal transport plan $\pi\in\OptPlans(\mu,\nu)$ which is induced by a map.
\end{theorem}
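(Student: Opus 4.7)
The plan is to prove existence by contradiction via \textbf{[LMTC]} and $c$-cyclical monotonicity, then deduce uniqueness from existence. First, I would suppose some $\pi \in \OptPlans(\mu,\nu)$ is not induced by a map and, using Lemma \ref{lem:RestrictOptimal}, restrict it to a compact product $\bar{B}_r(\hat x) \times \bar{B}_r(\hat x)$ to obtain $\hat\pi \in \OptPlans(\hat\mu,\hat\nu)$ that is still not induced by a map. On the compact support $c$ is lsc and finite $\hat\pi$-a.e., so $\int c \de \hat\pi < \infty$ and Proposition \ref{prop:cmonotonicity} provides a $c$-cyclically monotone Borel set $\Gamma$ on which $\hat\pi$ concentrates. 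Disintegrating $\hat\pi = \int \hat\pi_x \de \hat\mu(x)$ and invoking Lemma \ref{lem:NotConcDiag}, the probability measure $\hat\eta := \int \hat\pi_x \times \hat\pi_x \de \hat\mu(x)$ charges some mass off the diagonal, so there exist $(y_0, z_0) \in \supp(\hat\eta)$ with $y_0 \neq z_0$, and choosing $r > 0$ small enough that $B_r(y_0) \cap B_r(z_0) = \emptyset$ the set $A_r := \{x : \hat\pi_x(B_r(y_0)) > 0,\, \hat\pi_x(B_r(z_0)) > 0\}$ has positive $\hat\mu$-measure.

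Next, I would exploit the doubling assumption on $\m$ to invoke the Lebesgue differentiation theorem simultaneously on the countable family of Borel sets $\{x : \hat\pi_x(B_q(w)) > 0\}$, with $w$ running over a countable dense subset of $\X$ and $q$ over the rationals, and pick $x_0 \in A_r$ which is a $\hat\mu$-density point of each such set to which it belongs, a Lebesgue point of $f := \de \hat\mu/\de \m$ with $f(x_0) > 0$, and for which $\hat\pi_{x_0}(\Gamma_{x_0}) = 1$. Since $\hat\pi_{x_0}$ gives mass to both $B_r(y_0)$ and $B_r(z_0)$, I select $y_1 \in \Gamma_{x_0} \cap B_r(y_0)$, $z_1 \in \Gamma_{x_0} \cap B_r(z_0)$, obtaining $(x_0, y_1), (x_0, z_1) \in \Gamma$ and $y_1 \neq z_1$. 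Applying \textbf{[LMTC]} to the triple $(x_0, y_1, z_1)$ produces radii $r_1, r_2 > 0$ with $\liminf_{s \to 0} \m(F_{s|r_2}(x_0|y_1,z_1))/\m(B_s(x_0)) > 0$. Because $z_1 \in \supp(\hat\pi_{x_0})$, I can select from the countable family a ball $B_q(w) \subset B_{r_2}(z_1)$ with $\hat\pi_{x_0}(B_q(w)) > 0$, ensuring by construction that $x_0$ is a $\hat\mu$-density point of $A' := \{x : \hat\pi_x(B_q(w)) > 0\}$.

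An elementary density computation combining the $\m$-positive density of $F_{s|r_2}(x_0|y_1,z_1)$ at $x_0$ from \textbf{[LMTC]}, the $\hat\mu$-density one of $A'$ at $x_0$, and the Lebesgue point property $f(x_0) > 0$, yields that $\hat\mu(F_{s|r_2}(x_0|y_1,z_1) \cap A') > 0$ for all sufficiently small $s$. Picking $x_2$ in this intersection (with $\hat\pi_{x_2}(\Gamma_{x_2}) = 1$), there exists $z'' \in B_q(w) \cap \Gamma_{x_2} \subset B_{r_2}(z_1)$ with $(x_2, z'') \in \Gamma$. The defining inequality of $F_{s|r_2}(x_0|y_1,z_1)$, applied with $y' = y_1$ and $z' = z''$, gives
\begin{equation*}
c(x_2, y_1) + c(x_0, z'') < c(x_0, y_1) + c(x_2, z''),
\end{equation*}
whereas $c$-cyclical monotonicity of $\Gamma$ on the two pairs $(x_0, y_1), (x_2, z'') \in \Gamma$ would give $c(x_0, y_1) + c(x_2, z'') \leq c(x_0, z'') + c(x_2, y_1)$, i.e.\ the reverse inequality. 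This contradiction completes the existence proof.

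Uniqueness follows by a standard convexity argument: if $\pi_1, \pi_2 \in \OptPlans(\mu,\nu)$, then so is $\tfrac{1}{2}(\pi_1 + \pi_2)$, and the existence result forces each to be induced by a map $T_i$ with $\delta_{T_3(x)} = \tfrac{1}{2}(\delta_{T_1(x)} + \delta_{T_2(x)})$ for $\mu$-a.e.\ $x$, hence $T_1 = T_2$ $\mu$-a.e. I expect the main obstacle to lie in the ``matching'' step of the second paragraph: the radius $r_2$ delivered by \textbf{[LMTC]} at $x_0$ depends on the choice of $y_1, z_1 \in \supp \hat\pi_{x_0}$, so the set $A'$ whose $\hat\mu$-density at $x_0$ is needed cannot be fixed in advance. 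The countable family indexed by $(w, q)$ and its monotonicity in those parameters are precisely what circumvents this circularity.
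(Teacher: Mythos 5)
Your proposal is correct and reaches the contradiction along the same broad skeleton as the paper (restriction via Lemma \ref{lem:RestrictOptimal}, cyclical monotonicity via Proposition \ref{prop:cmonotonicity}, an off-diagonal point of $\eta$ via Lemma \ref{lem:NotConcDiag}, a density-point argument feeding the defining inequality of $F_{s\mid r_2}$, and the same convexity argument for uniqueness), but the core mechanism is genuinely different. The paper works with the sets $E_r=\{x:\pi_x(B_r(y)\cap\Gamma_x),\pi_x(B_r(z)\cap\Gamma_x)>0\}$, takes $\m$-density points $x_n$ of $E_{1/n}$, uses properness to extract a limit $x$ of the $x_n$, applies \textbf{[LMTC]} to the fixed triple $(x,y,z)$ coming from $\supp(\eta)$, and then crucially exploits the uniformity of the liminf over $\bar x\in B_{r_1}(x)$ to run the density estimate at the nearby point $\bar x=x_n$; since $\Gamma_x$ is built into the definition of $E_r$, mere nonemptiness of $F_{s\mid r_2}(\bar x\mid y,z)\cap E_{r_2}$ (a pure $\m$-density count) already produces the two pairs in $\Gamma$. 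You instead fix a single well-chosen point $x_0$ (simultaneously a density point of the countable family $A_{w,q}=\{x:\hat\pi_x(B_q(w))>0\}$, a Lebesgue point of $\de\hat\mu/\de\m$ with positive value, and with $\hat\pi_{x_0}(\Gamma_{x_0})=1$), take $y_1,z_1\in\Gamma_{x_0}$ themselves, apply \textbf{[LMTC]} at $(x_0,y_1,z_1)$ only at the centre $\bar x=x_0$, and defuse the circular dependence of $r_2$ on $(y_1,z_1)$ through the countable ball family. What this buys is noteworthy: you never use the uniformity of \textbf{[LMTC]} over $B_{r_1}(x)$ nor the compactness/limit-point step (properness enters only through the restriction lemma, which in your argument is in fact dispensable), so your proof establishes the theorem under a formally weaker, pointwise version of the hypothesis; the price is the extra bookkeeping with the countable family and the Lebesgue point of the density, which you need precisely because your intersection must have positive $\hat\mu$-measure (not just be nonempty) in order to pick $x_2$ with $\hat\pi_{x_2}(\Gamma_{x_2})=1$. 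Two small repairs: you should select $y_1,z_1$ in $\Gamma_{x_0}\cap\supp(\hat\pi_{x_0})$ (possible since $\hat\pi_{x_0}$-a.e.\ point of $\Gamma_{x_0}\cap B_r(z_0)$ lies in the support), because membership in $\Gamma_{x_0}$ alone does not place $z_1$ in $\supp(\hat\pi_{x_0})$, which you later invoke; and lower semicontinuity on a compact set does not give $\int c\,\de\hat\pi<\infty$ (an lsc cost can still be $+\infty$ there) --- the finiteness instead follows directly from $\int c\,\de\pi<\infty$, since $\hat\pi$ is a normalised restriction of $\pi$.
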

\begin{proof}
Once it is proved that every optimal plan is induced by a map, it can be concluded that the optimal plan is unique. Indeed, assume by contradiction that $\pi_1$ and $\pi_2$ are different optimal plans induced by $\mu$-different measurable maps $T_1,T_2 : X \to X$. That is, $\pi_1 = (\mathrm{id}\times T_1)_{\#} \mu$, $\pi_2 = ( \mathrm{id} \times T_2)_{\#} \mu$, and also $\mu({T_1 \neq T_2})>0$. Define
\begin{equation*}
    \hat{\pi}:=\frac{1}{2} \pi_1+\frac{1}{2} \pi_2=\left(\frac{\mathrm{id}\times T_1+\mathrm{id}\times T_2}{2}\right)_{\#} \mu,
\end{equation*}
then by convexity $\hat{\pi}$ remains an optimal plan, but $\hat{\pi}$ can not be induced by a map.

Therefore, we fix a $\pi \in \OptPlans(\mu, \nu)$ and we want to show $\pi$ is induced by a map.
Assume by contradiction that $\pi$ is not induced by a map. Since $\X$ is proper, by Lemma \ref{lem:RestrictOptimal} we can assume without loss of generality that $\pi$ has (compact) support contained in $\overline{B_R}(\hat{x})\times \overline{B_R}(\hat{x})$, and therefore that $\supp(\mu),\supp(\nu) \subset \overline{B_R}(\hat{x})$. Then, applying Proposition \ref{prop:cmonotonicity}, we deduce that $\pi$ is concentrated on a $c$-cyclical monotonic set $\Gamma \subseteq \overline{B_R}(\hat{x}) \times \overline{B_R}(\hat{x})$. Moreover, let $(\pi_x)_{x\in \X}$ be the disintegration kernel of $\pi$ via projection onto the first component, then $\pi_x(\Gamma_x) = 1$ for $\mu$-a.e. $x\in \X$, where $\Gamma_x:=\{z\in \X: (x,z)\in \Gamma\}$. 

Defining $\eta=\int \pi_x \times \pi_x d \mu(x)$ as in Lemma \ref{lem:NotConcDiag}, then $\eta$ is not concentrated on the diagonal $\Delta$, hence we can find $(y, z) \in \operatorname{supp}(\eta)$ with $y \neq z$.
For all $r>0$, define $$E_r:=\left\{x \in X: \pi_x\left(B_r(y) \cap \Gamma_x\right), \pi_x\left(B_r(z) \cap \Gamma_x\right)>0\right\}.$$ 
Then for all $r>0$, $$\eta\left(B_r(y) \times B_r(z)\right)=\int_{E_r} \pi_x\left(B_r(y)\right) \pi_x\left(B_r(z)\right) d \mu(x)>0,$$ showing $\mu\left(E_r\right)>0$ for all positive $r$.

For each $n\in \mathbb{N}$, by the absolute continuity of $\mu$ with respect to $\m$, $\mu(E_{1/n})>0$ implies $\m(E_{1/n})>0$. Therefore, one can take $x_n\in E_{1/n}$ to be an $\m$-density point of $E_{1/n}$. 
Since $\pi$ is concentrated on $\Gamma \subseteq \overline{B_R}(\hat{x}) \times \overline{B_R}(\hat{x})$, for every $n$ $E_{1 / n} \subseteq \overline{B_R}(\hat{x})$ which is compact, and hence there exists a converging subsequence $x_{n_k} \rightarrow x \in \overline{B_R}(\hat{x}) \subseteq \X$. Relabel $(x_{n_k})$ as $(x_n)$ without loss of generality. 
In particular,  for all $\varepsilon>0$ and for all $N \in \mathbb{N}$, there exist an $ n>N$ such that $ \di\left(x_n, x\right)<\varepsilon$ and also $E_{1 / n} \subseteq E_{1 / N} \text { since } n>N$ and $E_r$ are nested decreasing sets.

Now consider the triplet $(x,y,z)$. Since $c$ satisfies \textbf{[LMTC]}, there exist $r_1,r_2>0$ such that for all $ \bar{x} \in B_{r_1}(x)$, $$\liminf _{s \to 0} \frac{\m\left(F_{s \mid r_2}(\bar{x} \mid y, z)\right)}{\m\left(B_s(\bar{x})\right)}>0.$$ 
Since $F_{s \mid r_2}(\bar{x} \mid y, z)$ grows as $r_2$ decreases, one can assume $r_2$ is small enough so that $B_{r_2}(y)\cap B_{r_2}(z)=\emptyset$. Take $\varepsilon = r_1/2$ and $N > 1/r_2$, then by the above, there exist $n>N$ such that $\di(x_n,x) < \varepsilon$ and with $x_n \in E_{1/n}$ being a density point of $E_{1/n}$. Since $r_2 > 1/n$, $E_{1/n} \subset E_{r_2}$ so $x_n \in E_{r_2}$ is a density point of $E_{r_2}$ as well. Rename $\bar{x}:= x_n$.

Then, since $\di(\bar{x}, x) < \varepsilon < r_1$, $$\liminf _{s \to 0} \frac{\m\left(F_{s \mid r_2}(\bar{x} \mid y, z)\right)}{\m\left(B_s(\bar{x})\right)}=\delta,$$ for some $\delta > 0$. While, as $\bar{x}$ is a density point of $E_{r_2}$,
$$\liminf _{s \to 0} \frac{\m\left(B_s(\bar{x}) \cap E_{r_2}\right)}{\m\left(B_s(\bar{x})\right)}=1.$$ Hence for $s$ small enough so that the sum of these densities exceeds $1$, exists $\tilde{x} \in F_{s \mid r_2}(\bar{x} \mid y, z) \cap B_s(\bar{x}) \cap E_{r_2}$. Finally, since $\bar{x}, \tilde{x} \in E_{r_2}$, there exists $\bar{y} \in B_{r_2}(y)$, $\bar{z}\in B_{r_2}(z)$ with $(\bar x,\bar y),(\tilde x,\bar z)\in \Gamma$ and $\bar{y} \neq \bar{z}$. Also, since $\tilde{x} \in F_{s \mid r_2}(\bar{x} \mid y, z)$, for all $y^{\prime} \in B_{r_2}(y), z^{\prime} \in B_{r_2}(z),$ $c\left(\tilde{x}, y^{\prime}\right)+c\left(\bar{x}, z^{\prime}\right)<c\left(\bar{x}, y^{\prime}\right)+c\left(\tilde{x}, z^{\prime}\right) $. In particular, $c(\tilde{x}, \bar{y})+c(\bar{x}, \bar{z})<c(\bar{x}, \bar{y})+c(\tilde{x}, \bar{z})$ contradicting the $c$-cyclical monotonicity of $\Gamma$.

\end{proof}

\section{Proof of Theorem \ref{thm:main2}}\label{sec:main2}

In this section we prove our second main result. We start by introducing the notion of geodesic cone and densely scattered measure in a metric space $(\X,\di)$. In the next definition, $\Geo(\X,\di)$ denotes the set of geodesics (i.e. length minimising curves) in $(\X,\di)$, parameterized with constant speed on $[0,1]$.

\begin{definition}[see Figure \ref{def:cone}]\label{def:cone}
For every geodesic $\gamma\in \Geo(\X,\di)$ and $k>0$, we define the geodesic cone of size $k$ around $\gamma$ as 
\begin{equation*}
    C_{\gamma,k}:= \bigcup_{t\in [0,1]} \overline B_{tk}(\gamma(t)).
\end{equation*}
\end{definition}

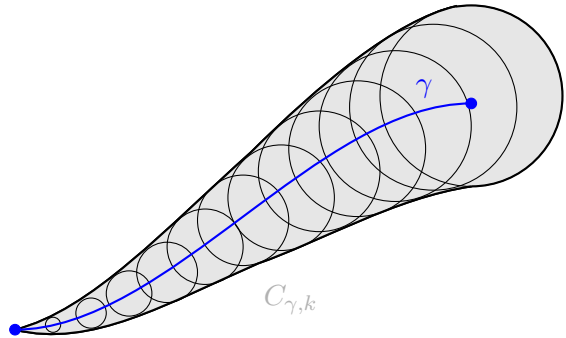
\begin{figure}[h]

    \centering
\begin{tikzpicture}
    \filldraw[fill=black!10!white, thick] (-3,0)..controls (-2,0.25) and (-1,1.36).. (0.1,2.35)..controls (1,3.25) and (2,4.1).. (2.8,4.29)-- (3,3) --(3,1.9) ..controls (2,1.75) and (1,1.16) .. (0.2,0.88) ..controls (-1,0.35) and (-2,-0.25)..(-3,0);
    \filldraw[fill=black!10!white,thick](3,3.1) circle (1.2);
    \fill[fill=black!10!white] (-3,0)..controls (-2,0.25) and (-1,1.36).. (0.1,2.35)..controls (1,3.25) and (2,4.1).. (2.8,4.29)-- (3,3) --(3,1.9) ..controls (2,1.75) and (1,1.16) .. (0.2,0.88) ..controls (-1,0.35) and (-2,-0.25)..(-3,0);
    \draw[thick] (-3,0)..controls (-2,0.25) and (-1,1.36).. (0.1,2.35)..controls (1,3.25) and (2,4.1).. (2.8,4.29);
    \draw[thick] (3,1.9) ..controls (2,1.75) and (1,1.16) .. (0.2,0.88) ..controls (-1,0.35) and (-2,-0.25)..(-3,0);
    \draw(3,3.1) circle (1.2);
    \draw(-2.5,0.065) circle (0.10);
    \draw(-2,0.22) circle (0.2);
    \draw(-1.5,0.48) circle (0.3);
    \draw(-1,0.76) circle (0.4);
    \draw(-0.5,1.1) circle (0.5);
    \draw(0,1.45) circle (0.6);
    \draw(0.5,1.75) circle (0.7);
    \draw(1,2.1) circle (0.8);
    \draw(1.5,2.4) circle (0.9);
    \draw(2,2.7) circle (1);
    \draw(2.5,2.95) circle (1.1);
    \draw[thick, blue] (-3,0)..controls (-1,0) and (1,3).. (3,3);
    \node at (2,3.2)[label=east:$\color{blue}\gamma$] {};
    \node at (0,0.4)[label=east:$\color{black!30!white}C_{\gamma,k}$] {};
    \filldraw[blue] (-3,0) circle (2pt);
    \filldraw[blue] (3,3) circle (2pt);
\end{tikzpicture}
    \caption{Definition of $C_{\gamma,k}$}
    \label{img: def:cone}
\end{figure}

\begin{definition}\label{def:denselyscattered}
We say that a probability measure $\mu\in \Prob(\X)$ is densely scattered if it is doubling on $(\supp(\mu),\di)$ and it asymptotically gives mass to every geodesic cone, that is, for $\mu$-almost every $x\in \X$, every geodesic $\gamma$ starting in $x$ ($\gamma(0)=x$) and every size $k>0$, 
\begin{equation*}\label{eq:densescat}
    \liminf_{r\to 0} \frac{\mu(C_{\gamma,k} \cap B_r(x))}{\mu(B_r(x))} >0.
\end{equation*}
\end{definition}

\begin{prop}\label{prop:doublingtoDS}
    Suppose there exists a reference measure $\m$ which is locally doubling on the whole space $(\X,\di)$. Then, every probability measure $\mu\in \Prob(\X)$ such that $\mu\ll\m$, is densely scattered. 
\end{prop}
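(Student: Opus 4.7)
The plan is to invoke the Lebesgue differentiation theorem (available because $\m$ is locally doubling on the proper space $\X$) to reduce both requirements in Definition \ref{def:denselyscattered} for $\mu$ to analogous questions for the reference measure $\m$, and then to dispatch the geometric part by fitting a single explicit ball inside each cone slice $C_{\gamma,k}\cap B_r(x)$. I would write $f=d\mu/d\m\in L^1(\m)$ and restrict attention to the full-$\mu$-measure set $\mathcal{G}$ of points $x$ at which $f(x)\in(0,\infty)$ and $x$ is a Lebesgue point of $f$. At such an $x$, for any Borel set $A$,
\begin{equation*}
\lim_{r\to 0}\left|\frac{\mu(A\cap B_r(x))}{\m(B_r(x))}-f(x)\,\frac{\m(A\cap B_r(x))}{\m(B_r(x))}\right|=0,
\end{equation*}
and in particular $\mu(B_r(x))/\m(B_r(x))\to f(x)>0$.

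Applying this reduction first with $A=B_{2r}(x)$ yields
\begin{equation*}
\limsup_{r\to 0}\frac{\mu(B_{2r}(x))}{\mu(B_r(x))}=\limsup_{r\to 0}\frac{\m(B_{2r}(x))}{\m(B_r(x))},
\end{equation*}
which is bounded by the local doubling constant of $\m$; this takes care of the doubling requirement for $\mu$ on $\supp(\mu)$ at $\mu$-a.e. point. For the cone condition, applying the reduction to $A=C_{\gamma,k}$ and dividing through by $\mu(B_r(x))/\m(B_r(x))\to f(x)>0$ gives
\begin{equation*}
\liminf_{r\to 0}\frac{\mu(C_{\gamma,k}\cap B_r(x))}{\mu(B_r(x))}=\liminf_{r\to 0}\frac{\m(C_{\gamma,k}\cap B_r(x))}{\m(B_r(x))},
\end{equation*}
so it suffices to bound the right-hand side away from zero.

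The geometric core is an elementary ball-inside-the-cone observation. Fix a non-constant geodesic $\gamma\in\Geo(\X,\di)$ with $\gamma(0)=x\in\mathcal{G}$ and length $\ell:=L(\gamma)$, and fix $k>0$. For $t:=r/(2(\ell+k))$ the point $y_r:=\gamma(t)$ satisfies $\di(y_r,x)=t\ell<r/2$; the ball $B_{\rho_r}(y_r)$ with $\rho_r:=tk=rk/(2(\ell+k))$ is contained both in $B_r(x)$ and in $C_{\gamma,k}$, and $B_r(x)\subset B_{2r}(y_r)$. Consequently
\begin{equation*}
\frac{\m(C_{\gamma,k}\cap B_r(x))}{\m(B_r(x))}\ge \frac{\m(B_{\rho_r}(y_r))}{\m(B_{2r}(y_r))},
\end{equation*}
and since the radius ratio $2r/\rho_r=4(\ell+k)/k$ is a fixed constant, iterating local doubling of $\m$ a bounded number $\lceil\log_2(4(\ell+k)/k)\rceil$ of times produces a constant $c_{\ell,k}>0$ with $\m(B_{\rho_r}(y_r))\ge c_{\ell,k}\,\m(B_{2r}(y_r))$, closing the argument.

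The only point that needs real care — and the only genuine obstacle — is ensuring that this iterated doubling chain is carried out with a single constant independent of $r$. The fix is to fix once and for all a neighbourhood $B_R(x)$ on which the local doubling constant $C$ of $\m$ is uniform and restrict to $r\le R/3$, so that every ball in the chain $B_{2r}(y_r),B_r(y_r),\ldots,B_{\rho_r}(y_r)$, centred within distance $r$ of $x$ and of radius at most $2r$, sits inside $B_R(x)$ where doubling is available.
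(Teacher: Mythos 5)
Your handling of the cone condition is correct and is essentially the paper's argument: you place a ball $\overline B_{tk}(\gamma(t))$ with $t$ proportional to $r$ inside $C_{\gamma,k}\cap B_r(x)$, note that $B_r(x)$ is contained in a ball centred at $\gamma(t)$ of comparable radius, and iterate the local doubling of $\m$ a bounded (radius-ratio-dependent, $r$-independent) number of times; your final remark about fixing $B_R(x)$ and restricting to $r\le R/3$ is exactly the bookkeeping needed, and matches the paper's "constant depending on the doubling constant of $\m$ around $x$". The only difference is how the estimate is transferred from $\m$ to $\mu$: the paper works at $\m$-density points of the sets $S_a^b=\{a<f<b\}$, whereas you work at Lebesgue points of $f=d\mu/d\m$, using the uniform-in-$A$ estimate $\mu(A\cap B_r(x))=f(x)\,\m(A\cap B_r(x))+o(\m(B_r(x)))$; this is a minor variant and, if anything, slightly more robust. (One small point: $\X$ is not assumed proper in this proposition, but Lebesgue differentiation only needs the local doubling of $\m$ via a Vitali-type covering argument, which the paper uses implicitly as well.)

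The genuine gap is in the doubling half. What you actually prove is an asymptotic statement at $\mu$-almost every point, namely $\limsup_{r\to0}\mu(B_{2r}(x))/\mu(B_r(x))\le C$. Definition \ref{def:denselyscattered} asks for $\mu$ to be doubling on the metric space $(\supp(\mu),\di)$, i.e. a bound $\mu(B_{2r}(x))\le C\,\mu(B_r(x))$ with a (locally) uniform constant valid for \emph{all} centres $x\in\supp(\mu)$ and all small radii, including centres at which no Lebesgue-point information is available; an a.e. asymptotic bound is strictly weaker and does not yield this. Indeed no argument along these lines can: absolute continuity with respect to a doubling measure does not force doubling on the support — on $(\R,|\cdot|,\Leb^1)$ take $d\mu=c\,e^{-1/x}\,\mathbbm{1}_{(0,1)}(x)\,dx$; then $0\in\supp(\mu)$ and $\mu(B_{2r}(0))/\mu(B_r(0))\to\infty$ as $r\to0$. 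So the sentence "this takes care of the doubling requirement" is not justified as written. You are in good company — the paper's own proof of Proposition \ref{prop:doublingtoDS} verifies only the cone-mass condition and is silent on the doubling clause — but to close this part one must either weaken the doubling clause in Definition \ref{def:denselyscattered} (what the later proofs really use is the existence of $\mu$-density points, which your Lebesgue-differentiation setup does provide under $\mu\ll\m$ with $\m$ locally doubling) or impose additional hypotheses on $f$.
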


\begin{proof}
    As $\mu\ll\m$, Radon-Nikodym theorem guarantees the existence of a $\m$-measurable (nonnegative) function $f$ such that $\mu=f\m$. Observe that $\mu$-almost every $x\in \X$ belongs to and is a density point of a set of the form 
    \begin{equation*}
        S_a^b := \{ y\in \X \mid f(y)\in (a,b)\}, \qquad \text{for some }a,b>0.
    \end{equation*}
    Therefore, it is sufficient to prove \eqref{eq:densescat} for $x\in S_a^b$ being a density point of $S_a^b$ (for some $a,b>0$), every geodesic $\gamma$ starting in $x$ and every size $k>0$. 
    
Observe that, for any $r$ small enough, there exists $t(r)\in[0,1]$ such that $t(r)d(\gamma_0,\gamma_1)+t(r)k=r$.
In particular, we have that $B_{t(r)k}(\gamma_{t(r)})\subset C_{\gamma,k}\cap B_r(x)$.
On the other hand, for any given $y\in B_r(x)$, we have that
\begin{align*}
    d(\gamma_{t(r)},y)< t(r)d(\gamma_0,\gamma_1)+r=t(r)[2d(\gamma_0,\gamma_1)+k]\le 2^mt(r)k,
\end{align*}
for some $m\in \mathbb N$ independent of $r$.
As a consequence, we have that
\begin{align*}
    \liminf_{r\to0} \frac{\m(B_{t(r)k}(\gamma_{t(r)}))}{\m(B_r(x))}\ge \liminf_{r\to0}\frac{\m(B_{t(r)k}(\gamma_{t(r)}))}{\m(B_{2^mt(r)k}(\gamma_{t(r)}))}\ge C>0,
\end{align*}
where the constant $C$ depends only on the doubling constant of $\m$ around $x$ and on $m$. Now, observe that, since we assumed $x\in S_a^b$ to be a density point of $S_a^b$, we know that 
\begin{equation*}
    \mu(B_r(x)) \leq 2 b\, \m(B_r(x)) \quad \text{ and }\quad \mu(B_{t(r)k}(\gamma_{t(r)}))\geq \frac{a}{2}\,\m(B_{t(r)k}(\gamma_{t(r)})),
\end{equation*}
definitely as $r\to 0$. Finally, we can deduce that
\begin{align*}
    \liminf_{r\to0} \frac{\mu(C_{\gamma,k}\cap B_r(x))}{\mu(B_r(x))}\ge \liminf_{r\to0} \frac a {4b}  \frac{\m(B_{t(r)k}(\gamma_{t(r)}))}{\m(B_{2^mt(r)k}(\gamma_{t(r)}))}\ge \frac{aC}{4b}>0,
\end{align*}
concluding the proof.
\end{proof}

\begin{remark}
    We highlight that, in the definition of densely scattered measure (Definition \ref{def:denselyscattered}), we only require the measure to be doubling on its support. In particular, the introduction of the notion of densely scattered measure, allows us to obtain our main result (Theorem \ref{thm:NonBranch+ DenselyScattered -> !OTM}), even without assuming the existence of a nice reference measure. 
\end{remark}

We now proceed to define the notion of locally-uniformly non-branching, which will be the fundamental ingredient to prove existence and uniqueness of optimal transport maps in Theorem \ref{thm:NonBranch+ DenselyScattered -> !OTM}.

\begin{definition}\label{def:LocalUniformNonBranch}
    A metric space $(\X,\di)$ is said to be locally-uniformly non-branching if it is Polish and for every triple $x,y,z\in \X$ such that $y\neq z$ and $\di(x,y)=\di(x,z)$, there exists $r>0$ and a constant $\rho>0$ with the following property: for every triple $x'\in B_{r}(x)$, $y'\in B_r(y) $ and $ z'\in B_r(z)$ there exists a geodesic $\gamma\in \Geo(\X,\di)$, $\gamma(0)=x', \gamma(1)=y'$,  so that
\begin{equation*}
    \liminf_{t\downarrow 0} \frac{ \di(\gamma(t), z')-  \di(x', z')}{t} \geq  - (1- \rho) \di(x',y') .
\end{equation*}
\end{definition}

\begin{remark} (i) For any triplet $(x',y',z')$, it always holds that $$\liminf_{t\downarrow 0} \frac{ \di(\gamma(t), z')-  \di(x', z')}{t} \geq  -\di(x',y') $$ by the triangular inequality. The constant $\rho$ in Definition \ref{def:LocalUniformNonBranch} adds a quantitative estimate on non-branching of geodesics.
 Observe that in the Euclidean space the quantity on the left hand side is equal to $-\cos\alpha \cdot \di(x',y')$ where $\alpha$ is the angle between $y'-x'$ and $z'-x'$. Therefore, by continuity, one may choose $\rho$ such that $(1-\rho)>\cos\alpha$ to obtain the desired inequality, cf. the proof of Proposition \ref{thm: C1ConvexNorm -> LocUnifNonBranch}.

(ii) Definition \ref{def:LocalUniformNonBranch} resembles a lower curvature bound of Alexandrov type. Indeed, using triangle comparison, it is possible to prove that every Alexandrov space is locally-uniformly non-branching. Moreover, the latter is a significantly weaker assumption compared to the triangle comparison definition of Alexandrov spaces, as it only requires a condition at the limit. This is demonstrated in Proposition \ref{thm: C1ConvexNorm -> LocUnifNonBranch}, where we prove that locally-uniformly non-branching holds also in metric spaces which are not Alexandrov.
\end{remark}

\begin{prop}\label{thm: C1ConvexNorm -> LocUnifNonBranch}
    Let $\| \cdot \|$ be a $C^1$ and strictly convex norm on $\R^n$, and let $\di(x,y)=\| x-y\|$. Then $(\R^n,\di)$ is locally-uniformly non-branching.
\end{prop}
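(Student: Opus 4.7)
The plan is to exploit strict convexity to force geodesics in $(\R^n,\di)$ to be straight segments, use $C^1$-regularity to compute the relevant one-sided derivative as a duality pairing, and upgrade a strict inequality at the base triple $(x,y,z)$ to a uniform estimate on a neighborhood by continuity.

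First I would reduce the problem to a concrete estimate. By strict convexity of $\|\cdot\|$, between any two distinct points $p,q\in\R^n$ the only constant-speed geodesic is the affine segment $t\mapsto p+t(q-p)$, so the candidate $\gamma$ from $x'$ to $y'$ in Definition \ref{def:LocalUniformNonBranch} is forced to be $\gamma(t)=x'+t(y'-x')$. Since $\|\cdot\|$ is $C^1$ on $\R^n\setminus\{0\}$, the duality map $Df\colon\R^n\setminus\{0\}\to(\R^n)^*$ given by $Df(w):=\nabla(\|\cdot\|)(w)$ is well-defined and continuous, with $\|Df(w)\|_*=1$ and $\langle Df(w),w\rangle=\|w\|$, and the directional derivative reads
\begin{equation*}
    \lim_{t\downarrow 0}\frac{\|(x'-z')+t(y'-x')\|-\|x'-z'\|}{t}=\langle Df(x'-z'),\,y'-x'\rangle.
\end{equation*}
The target inequality of Definition \ref{def:LocalUniformNonBranch} therefore rewrites as the task of finding $\rho>0$ such that
\begin{equation*}
    \langle Df(x'-z'),\,x'-y'\rangle\leq (1-\rho)\,\|x'-y'\|
\end{equation*}
holds uniformly for $(x',y',z')$ in a product of small balls around $(x,y,z)$.

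Next I would treat the base triple. Set $w_0:=x-z$ and $u_0:=x-y$; the assumption $\di(x,y)=\di(x,z)$ together with $y\neq z$ yields $\|u_0\|=\|w_0\|>0$ (hence both $x\neq y$ and $x\neq z$) and $u_0\neq w_0$. The general bound $\langle Df(w_0),u_0\rangle\leq\|u_0\|$ holds because $\|Df(w_0)\|_*=1$. If equality held, $Df(w_0)$ would attain its supremum on the unit ball at both $w_0/\|w_0\|$ and $u_0/\|u_0\|$; strict convexity of $\|\cdot\|$ forbids two distinct contact points of a supporting functional, forcing $u_0/\|u_0\|=w_0/\|w_0\|$ and, combined with equal norms, $u_0=w_0$, a contradiction. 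Hence there exists $\rho_0>0$ with $\langle Df(w_0),u_0\rangle\leq (1-\rho_0)\|u_0\|$.

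Finally I would propagate this strict inequality to a neighborhood by continuity. Since $Df$ is continuous on $\R^n\setminus\{0\}$, the function
\begin{equation*}
    \Psi(x',y',z'):=\langle Df(x'-z'),\,x'-y'\rangle-\bigl(1-\tfrac{\rho_0}{2}\bigr)\|x'-y'\|
\end{equation*}
is continuous on a neighborhood of $(x,y,z)$ (where $x'-z'$ and $x'-y'$ remain nonzero) and satisfies $\Psi(x,y,z)\leq-(\rho_0/2)\|u_0\|<0$. Hence $\Psi<0$ on some $B_r(x)\times B_r(y)\times B_r(z)$, which is exactly the required estimate with $\rho=\rho_0/2$. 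I expect the main obstacle to be the equality-case analysis in the previous paragraph: strict convexity of $\|\cdot\|$ is precisely what rules out $\langle Df(w_0),u_0\rangle=\|u_0\|$, while the $C^1$ hypothesis enters separately to ensure continuity of $Df$; both assumptions are therefore used essentially, in line with the heuristic suggested by the Euclidean case discussed in the preceding remark.
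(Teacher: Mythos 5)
Your proof is correct and follows essentially the same route as the paper: take the straight segment as the geodesic, compute the one-sided derivative as $\langle\nabla\|\cdot\|(x'-z'),y'-x'\rangle$, use $\|\nabla\|\cdot\|(w)\|_*=1$ to get the non-strict bound, exclude equality via strict convexity (using $\di(x,y)=\di(x,z)$), and propagate the strict inequality by continuity. The only difference is cosmetic: you handle the equality case with a self-contained unique-contact-point argument, whereas the paper invokes a cited lemma on the duality map.
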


\begin{proof}
Denote $g(\cdot)=\|\cdot\|$ and fix $x',y',z'\in \R^n$. Let $\gamma$ be the straight-line geodesic connecting $x'$ and $y'$, i.e. $\gamma(t)=(1-t)x' + ty'$. Then
$$\liminf _{t \to 0} \frac{\di(\gamma(t), z')-\di(\gamma(0), z')}{t}=\nabla g(x'-z') \cdot(y'-x'),$$
and the limit is actually attained. 

Now fix $x,y,z\in \R^n$, we would like to show that, for some $r>0$ and $\rho>0$, for any triplet $x'\in B_{r}(x)$, $y'\in B_r(y) $ and $ z'\in B_r(z)$ it holds that 

\begin{equation}\label{eqn:graddot>1-rho...}
\nabla g(x'-z') \cdot(y'-x') \geq-(1-\rho) \di(x', y'),
\end{equation} 
 Observing that $g$ is continuously differentiable away from $0$, and that all terms in \eqref{eqn:graddot>1-rho...} are continuous with respect to changes in $x',y',z'$ and away from $0$ on suitably small neighbourhoods of $x,y,z$, it suffices to show that $$\nabla g(x-z)  \cdot \left( {-\frac{y-x}{\di(x,y)}}\right) < 1.$$ 

Observe that $\left\|\frac{y-x}{\di(x, y)}\right\|=1$. Now, denoting with $\norm{\cdot}_*$ the dual norm of $\|\cdot\|$, we observe that
$$
\begin{aligned}
\|\nabla g(x-z)\|_*&:=\sup _{\|v\|=1}|\nabla g(x-z) \cdot v| \\
&=\sup _{\|v\|=1} \lim_{t\to 0}\frac{ |\di (x+tv, z )-\di (x, z ) |}{t} \leq \sup _{\|v\|=1} \lim_{t\to 0} \frac{\di(x+tv,x)}{t}= 1.
\end{aligned}
$$
Moreover, picking $v$ in direction of $z-x$ the inequality is instead an equality and thus we obtain $\|\nabla g(x-z)\|_* = 1$. As a consequence, we deduce that $$\nabla g(x-z)  \cdot \left( {-\frac{y-x}{\di(x,y)}}\right) \leq 1.$$

Now assume by contradiction that strict inequality is not attained, that is, $\nabla g(x-z)  \cdot \left( {-\frac{y-x}{\di(x,y)}}\right) = 1.$ Then, since $\| \cdot \|$ is strictly convex, $\nabla g(x-z)$ is just the unique dual vector of $-\frac{y-x}{\di(x, y)}$. On the other hand, since $\|\cdot \|$ is $C^1$ and strictly convex, according to \cite[Lemma 2.9]{magnaboscorossi} we obtain $\nabla g(x-z) = \frac{(x-z)^*}{\|x-z\|}$. Therefore, we conclude that $\Big(\frac{x-y}{\|x-y\|}\Big)^*=\Big(\frac{x-z}{\|x-z\|}\Big)^*$ and thus that $y = z$, finding a contradiction.
\end{proof}

\begin{theorem}\label{thm:NonBranch+ DenselyScattered -> !OTM}
Let $(\X,\di)$ be a locally-uniformly non-branching metric space, and let $\mu\in \Prob(\X)$ be a densely scattered measure. Let $c(x,y)=\di^2(x,y)$. Then, for every $\nu\in \Prob(\X)$, if $\OptPlans(\mu,\nu)\neq \emptyset$, there exists a unique optimal transport plan $\pi \in \OptPlans(\mu,\nu)$ and it is induced by a map. 
\end{theorem}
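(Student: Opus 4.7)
The plan is to mirror the strategy used in the proof of Theorem \ref{thm: LMTC -> !OTM}. Uniqueness will follow from existence of transport maps via the same convexity argument, so I concentrate on proving that every $\pi \in \OptPlans(\mu,\nu)$ is induced by a map. Assuming for contradiction that some $\pi$ is not induced by a map, by Proposition \ref{prop:cmonotonicity} it is concentrated on a $c$-cyclically monotone set $\Gamma$, and by Lemma \ref{lem:NotConcDiag} the measure $\eta := \int \pi_x \times \pi_x \de \mu$ is not supported on the diagonal, giving a pair $(y,z)\in \supp(\eta)$ with $y\ne z$. Define $E_r := \{x : \pi_x(B_r(y)\cap \Gamma_x)>0 \text{ and } \pi_x(B_r(z)\cap \Gamma_x)>0\}$ and note that $\mu(E_r)>0$ for every $r>0$.

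Since densely scattered measures are doubling on their support, I can extract density points $x_n$ of $E_{1/n}$ and, after passing to a subsequence, a limit $x_n \to x_\infty$. The role played by the Local Metric Twist Condition in the proof of Theorem \ref{thm: LMTC -> !OTM} is now played jointly by the locally-uniformly non-branching hypothesis and the densely scattered property. Concretely, I apply Definition \ref{def:LocalUniformNonBranch} to an equidistant triple derived from $(x_\infty, y, z)$: if $\di(x_\infty,y)\neq \di(x_\infty,z)$, the farther point is projected along a geodesic to obtain $z_0$ with $\di(x_\infty, z_0) = \di(x_\infty, y)$, and the non-branching property applied to $(x_\infty, y, z_0)$ furnishes radii $r_1, r_2>0$ and a constant $\rho>0$. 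For $x_n$ sufficiently close to $x_\infty$, I pick $\bar y \in B_{r_2}(y) \cap \supp(\pi_{x_n})$, so $(x_n, \bar y)\in \Gamma$, and consider the geodesic $\gamma$ from $x_n$ to $\bar y$ given by the definition. By the densely scattered property applied at $x_n$, the geodesic cone $C_{\gamma,k}$ has positive $\mu$-density around $x_n$ for every $k>0$; intersecting with $E_{r_2}$ (whose density at $x_n$ is $1$) yields a point $\tilde x \in E_{r_2}\cap C_{\gamma,k}$ arbitrarily close to $\gamma(t)$ for small $t>0$, and hence a $\bar z\in B_{r_2}(z)\cap \supp(\pi_{\tilde x})$ with $(\tilde x, \bar z)\in \Gamma$.

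Combining $c$-cyclic monotonicity of $\Gamma$ on the pairs $(x_n, \bar y)$ and $(\tilde x, \bar z)$ with the first-order non-branching estimate along $\gamma$, and expanding the quadratic cost $c=\di^2$, produces, after dividing by $t$ and letting $t\to 0$, an inequality of the form $(1-\rho)\di(x_n, \bar z) \geq \di(x_n, \bar y) + o(1)$. For perturbation radii small enough compared to $\di(x_\infty, y)$, this contradicts $\di(x_n, \bar y)$ and $\di(x_n, \bar z)$ both being close to the common value $\di(x_\infty, y) = \di(x_\infty, z_0)$, which in turn comes from the strict positivity of $\rho$.

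The main obstacle is that the non-branching estimate as stated applies naturally to perturbations $\bar z_0$ near the scaled equidistant point $z_0$, not near the actual support point $\bar z \approx z$. Bridging this requires exploiting geodesic additivity in the non-branching space, observing that the geodesic from $\tilde x$ to $\bar z$ passes near $z_0$, to transfer the non-branching inequality at $\bar z_0$ to $\bar z$ up to controlled error; this delicate balancing of parameters, together with the approximation $\tilde x \approx \gamma(t)$ inherited from the geodesic cone and the doubling control on $\mu$, is the core technical work.
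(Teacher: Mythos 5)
Your overall architecture (uniqueness by convexity, cyclical monotonicity, the sets $E_r$, density points $x_n\to x_\infty$, and — in the equidistant case — applying the locally-uniformly non-branching estimate along a geodesic $\gamma$ from $x$ towards a support point $\bar y$, then using the densely scattered property to find $\tilde x\in C_{\gamma,k}\cap E_r$ and contradict $c$-cyclical monotonicity) matches the paper's Case~1. The genuine gap is exactly the step you flag as ``the core technical work'' and do not carry out: when $\di(x_\infty,y)\neq\di(x_\infty,z)$ you project the farther point to an equidistant $z_0$ and then need to transfer the non-branching estimate, valid for perturbations $z'\in B_r(z_0)$, to the actual support points $\bar z$ near $z$. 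This transfer is not just delicate, it goes the wrong way: if $z_0$ lies on a geodesic from $x'$ to $\bar z$, the triangle inequality gives $\di(\gamma(t),\bar z)-\di(x',\bar z)\leq \di(\gamma(t),z_0)-\di(x',z_0)$, i.e.\ an upper bound, whereas you need a \emph{lower} bound on $\liminf_{t\downarrow 0}\bigl(\di(\gamma(t),\bar z)-\di(x',\bar z)\bigr)/t$; Definition \ref{def:LocalUniformNonBranch} only speaks about equidistant triples, so nothing forces the first-order decay of $\di(\cdot,\bar z)$ to be controlled by that of $\di(\cdot,z_0)$ in this generality (there is no first-variation formula available). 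Moreover the geodesic from $\tilde x$ to $\bar z$ need not pass near $z_0$, which was defined relative to $x_\infty$.

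The paper avoids this issue entirely, and this is the idea you are missing: after the WLOG normalisation $\di(x^*,y^*)\geq\di(x^*,z^*)$ (legitimate since $y^*,z^*$ play symmetric roles), the strictly unequal case needs \emph{no} non-branching at all. Moving $\tilde x$ slightly towards the \emph{farther} target $y$ along a geodesic gains approximately $2t\,\di(x,y)^2$ in $\di^2(\cdot,y)$, while the loss in $\di^2(\cdot,z)$ is crudely bounded by $2t\,\di(x,y)\,\di(x,z)\leq 2t(1-\delta)\di(x,y)^2$ using only the triangle inequality and the strict convexity of the quadratic cost; choosing $\varepsilon$ (the cone aperture) small relative to $\delta$ already contradicts cyclical monotonicity. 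The non-branching hypothesis is invoked only in the equidistant case, and there it is applied directly with $z'=\bar z\in B_r(z^*)$, the genuine support point, so no transfer between base points is ever required. To complete your proof you should replace your reduction-to-equidistant scheme by this two-case split; as written, the unequal-distance case is unproved and the proposed bridge via $z_0$ does not appear salvageable from the stated hypotheses. A secondary, minor omission: to extract the convergent subsequence $x_{n_k}$ you should first restrict $\pi$ to a bounded (compactly supported) sub-plan as in Lemma \ref{lem:RestrictOptimal}, which also guarantees the cyclically monotone set $\Gamma$ can be taken inside a fixed compact set.
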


\begin{remark}
    If, in Theorem \ref{thm:NonBranch+ DenselyScattered -> !OTM}, we have that $\mu,\nu \in \ProbTwo(\X)$ (the set of probabilities in $\Prob(\X)$ with finite second order moment), then $\OptPlans(\mu,\nu)\neq \emptyset$ and there is no need to add this as an assumption.
\end{remark}

\begin{proof}
As is in proof of Theorem \ref{thm: LMTC -> !OTM}, it is sufficient to prove that every $\pi\in\OptPlans(\mu,\nu)$ is induced by an optimal transport map. 
Assume by contradiction that $\pi\in\OptPlans(\mu,\nu)$ is not induced by a map and consider $\hat\pi\in \OptPlans(\hat\mu,\hat\nu)$ as defined in Lemma \ref{lem:RestrictOptimal}. Since $\hat \pi$ has compact support and $c$ is lower-semicontinuous, then $\hat\pi$ is concentrated on a $c$-cyclically monotone compact set $\Gamma \subset \X \times \X$. Let $(\hat\pi_x)$ be the disintegration kernel of $\hat\pi$ via projection onto the first component, (i.e. $\hat\pi = \int _\X \hat\pi_x \mathrm{d}\hat\mu (x)$), then $\hat\pi_x(\Gamma_x)=1$ for $\hat\mu$-almost every $x\in \X$. Moreover, as in proof of Theorem \ref{thm: LMTC -> !OTM}, there exist points $y^*\neq z^*$ such that for every $r>0$ the set
\begin{equation*} E_r := \{x\in \X \suchthat \hat\pi_x(B_r(y^*)\cap \Gamma_x),\hat\pi_x(B_r(z^*)\cap \Gamma_x)>0   \}
\end{equation*}
has positive $\hat\mu$-measure and thus it also has positive $\mu$-measure.

For every $n$, let $x_n \in E_{1/n}$ be density points of $E_{1/n}$ (with respect to the measure $\mu$). Since $\mu(E_{1/n})>0$ for all $n$, without loss of generality we can assume all $x_n$'s to be different. Since $E_{1 / n} \subset \p_1(\supp \pi)$ which is bounded by some compact set, there exists subsequence $x_{n_k}$, which can be relabeled $x_n$ without loss of generality, that converges to some $x^* \in \X$. 

Without loss of generality, we assume that $\di\left(x^*, y^*\right) \geq \di\left(x^*, z^*\right)$. Now the proof is divided into two cases. When $\di(x^*, y^*) = \di(x^*, z^*)$, we take advantage of the locally-uniformly non-branching assumption, while when $\di\left(x^*, y^*\right) \geq \di\left(x^*, z^*\right)$ the strict convexity of the cost function will lead to a contradiction.

 \textbf{Case 1}: $\di(x^*, y^* ) = \di(x^*, z^*)$.
 
Since $(X,\di)$ is locally-uniformly non-branching, we can take $r_0,\rho>0$ be such that for all $x \in B_{r_0}\left(x^*\right), y \in B_{r_0}\left(y^*\right), z \in B_{r_0}\left(z^*\right)$ there exists geodesic $\gamma $ connecting $x$ and $y$ such that $$\liminf _{t \rightarrow 0} \frac{\di(\gamma(t), z)-\di(\gamma(0), z)}{t} \geq-(1-\rho) \di(x, y).$$

Fix some $0<\delta < \frac{\rho}{1-\rho}$. Since 
 $(1+\delta) \di \left(x^*, y^*\right)> \di \left(x^*, z^*\right)$, by continuity of the distance, there exist some $r_1 > 0 $ such that for all $y \in B_{r_1}\left(y^*\right), z \in B_{r_1}\left(z^*\right), x \in B_{r_1}\left(x^*\right)$, $(1+\delta) \di (x, y) \geq \di (x, z)$.  Let $r_2:=\frac{1}{3} \min \left\{\di\left(x^*, y^*\right), \di\left(y^*, z^*\right)\right\}$ to ensure the balls are disjoint (see Fact II below) and take $r=r_0 \wedge r_1 \wedge r_2$. Since $x_n \to x^*$, there exist $n$ such that $\di(x_n,x^*)<r$ and $1/n < r$ so that $x_n \in E_{\frac{1}{n}} \subseteq E_r$. If we relabel this $x_n$ as $x$, then the following facts can be checked:

 \begin{itemize}
      \item Fact I: $x\in E_r$ is a density point of $E_r$.
     \item Fact II: $B_r(y^*)\cap B_r(z^*) = \emptyset$.
     \item Fact III: For any $y\in B_r (y^* ),  z\in B_r (z^* )$, $$(1+\delta) \di(x, y) \geq \di(x, z).$$
     \item Fact IV: For any $y \in B_r\left(y^*\right), z \in B_r\left(z^*\right)$, there exists a geodesic $\gamma$ connecting $x$ and $y$, satisfying $$\liminf \frac{\di(\gamma(t), z)-\di(x, z))}{t} \geq-\di(x, y)(1-\rho).$$
 \end{itemize}

 Since $x\in E_r$ there exist $y\in B_r(y^*)$ such that $(x,y)\in \Gamma$. Denote $d:=\di(x,y)$, let $\gamma$ be the geodesic connecting $x$ and $y$, and let $\varepsilon=\left(\frac{(\rho-\delta+\rho\delta) d}{2 (3+2\delta)}\right)$. Up to taking a smaller $\varepsilon$, we can assume that $\varepsilon< d$. Consider the cone $C_{\gamma, \varepsilon}$. Since $\mu$ is densely scattered and $x$ is a $\mu$-density point of $E_r$, for any $D<r$ there exist some $\tilde{x} \in C_{\gamma, \varepsilon}\cap E_r$ such that $\di (x, \tilde{x})=d_0<D$ (that is $d_0$ can be made arbitrarily small), and also some $z\in B_r(z^*)$ such that $(\tilde{x}, z) \in \Gamma$. Due to the construction of $r$, we can guarantee that $x,\tilde{x}, y, z$ are all distinct. We refer the reader to Figure \ref{img: thm:NonBranch+ DenselyScattered -> !OTM} for a graphic representation.

 \begin{figure}[h]
   \centering
\begin{tikzpicture}
    \filldraw[fill=black!10!white](3,3.1) circle (1.2);
    \fill[fill=black!10!white] (-3,0)..controls (-2,0.25) and (-1,1.36).. (0.1,2.35)..controls (1,3.25) and (2,4.1).. (2.8,4.29)-- (3,3) --(3,1.9) ..controls (2,1.75) and (1,1.16) .. (0.2,0.88) ..controls (-1,0.35) and (-2,-0.25)..(-3,0);
    \draw (-3,0)..controls (-2,0.25) and (-1,1.36).. (0.1,2.35)..controls (1,3.25) and (2,4.1).. (2.8,4.29);
    \draw(3,1.9) ..controls (2,1.75) and (1,1.16) .. (0.2,0.88) ..controls (-1,0.35) and (-2,-0.25)..(-3,0);
    \draw[thick, blue] (-3,0)..controls (-1,0) and (1,3).. (3,3);
    \draw[thick, red] (-3,0)..controls (-2,-0.2) and (-1,0.5).. (0.1,1);
    \draw[thick] (2.5,-1.5)..controls (2,-1.4) and (1,1)..(0.1,1);
    \draw[thick,mygreen] (0.1,1)-- (0.3,1.7);
    \filldraw[red] (0.1,1) circle (2pt);
    \node at (-0.2,0.55)[label=east:$\color{red}\tilde x$] {};
    \node at (-1.2,0.25)[label=south:$\color{red}d_0$] {};
    \node at (2,3.2)[label=east:$\color{blue}\gamma$] {};
    \node at (-1,3.1)[label=east:$\color{black!30!white}C_{\gamma,\varepsilon}$] {};
    \node at (0.18,1.55)[label=north:$\color{blue}\gamma(t)$] {};
    \filldraw[blue] (0.3,1.7) circle (2pt);
    \filldraw[blue] (-3,0) circle (2pt);
    \node at (-3,0)[label=west:$\color{blue}x$] {};
    \node at (3,3)[label=east:$\color{blue}y$] {};
    \filldraw[blue] (3,3) circle (2pt);
    \node at (1.3,1.37)[label=west:$\color{mygreen}\leq \varepsilon t$] {};
    \filldraw (2.5,-1.5) circle (2pt);
    \node at (2.5,-1.5)[label=east:$z$] {};
\end{tikzpicture}
    \caption{Picture of the strategy to prove Theorem \ref{thm:NonBranch+ DenselyScattered -> !OTM}.}
    \label{img: thm:NonBranch+ DenselyScattered -> !OTM}
\end{figure}
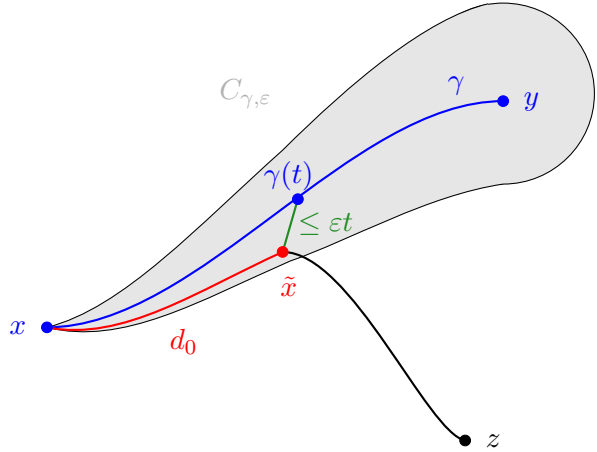

 Now we construct some $\tilde{x}$ so that the pairs $(x,y),(\tilde{x},z) \in \Gamma$ contradict $c$-cyclical monotonicity.

For each $\tilde{x} \in C_{\gamma, \varepsilon}\cap E_r$, by the definition of $C_{\gamma,\varepsilon}$, there exist a smallest $t>0$ such that $\di(\tilde{x}, \gamma(t)) \leq \varepsilon t$. The following estimate then holds:
 \begin{equation*}
     \begin{aligned}
\mathrm{Cost}_1 & :=\di^2(x, y)-\di^2(\tilde{x}, y) \\
& =(\di(x, y)+\di(\tilde{x}, y))(\di(x, y)-\di(\tilde{x}, y)) \\
& \geq(2 \di(x, y)-\di(x, \tilde{x}))(\di(x, y)-\di(\tilde{x}, y)) \\
& \geq\left(2 d-d_0\right)(t \di(x, y)-t \varepsilon) \\
& =2 t\left(d-\frac{d_0}{2}\right)(d-\varepsilon),
\end{aligned}
 \end{equation*}
  where we used triangular inequality and the observation: $\di (\tilde{x}, y) \leq \di (\tilde{x}, \gamma(t))+ \di(\gamma(t), y) \leq \varepsilon t+(1-t) d= \di(x, y)-t(d-\varepsilon) < \di(x, y).$ Notice that, up to taking $d_0$ sufficiently small, we have that $2d-d_0 > 0$. 

On the other hand, we note that 
$$\begin{aligned}
-\mathrm{Cost}_2 &:=\di ^2(\tilde{x}, z)-\di^2(x, z) \\
& =(\di(x, z)+\di(\tilde{x}, z))(\di(\tilde x,z)-\di(\gamma(t), z) + \di(\gamma(t),z) - \di(x, z)) \\
& \geq (\di(x, z)+\di(\tilde{x}, z))(-t \di(x, y)(1-\rho) - o(t) -\varepsilon t) \\
& =- t(\di(x, z)+\di(\tilde{x}, z))(d(1-\rho)+\varepsilon + o(1)) \\
& \geq-2 t\left((1+\delta) d+\frac{d_0}{2}\right)(d(1-\rho)+\varepsilon+ o(1)) \quad \text{as }t\to 0,
\end{aligned}$$
where we used both Fact III and Fact IV.
Observing that 
\begin{equation}
     d_0 = \di(x,\tilde{x}) \geq \di(x, \gamma(t)) - \di( \gamma(t), \tilde{x}) \geq (d-\varepsilon) t,
\end{equation}
we deduce that, up to taking $d_0$ sufficiently small, $t$ is sufficiently small and we have
$$-\mathrm{Cost}_2\geq-2 t\left((1+\delta) d+\frac{d_0}{2}\right)(d(1-\rho)+2\varepsilon) .$$

Summing the two estimates, we obtain 
$$\mathrm{Cost}_1 - \mathrm{Cost}_2 \geq 2 t\left(\left(d-\frac{d_0}{2}\right)(d-\varepsilon)-\left((1+\delta) d+\frac{d_0}{2}\right)(d(1-\rho)+2\varepsilon)\right):=2 t F\left(d_0\right).$$ 
For the selections of $\delta$ and $\varepsilon$, $F(0)=d(d-\varepsilon)-(1+\delta) d(d(1-\rho)+2\varepsilon)>0$. Also, $F$ changes continuously as $d_0$ decreases to $0$. Recall that $d_0= \di(x, \tilde{x})$ can be made arbitrarily small, with $t>0$, this means $\mathrm{Cost}_1-\mathrm{Cost}_2 > 0$ for some particular $d_0$. That is, for some selection of $\tilde{x}$ (and $z$), $\di^2(x, y)+\di^2(\tilde{x}, z)>\di^2(\tilde{x}, y)+\di^2(x, z)$, contradicting $c$-cyclical monotonicity of $\Gamma$.

\textbf{Case 2}:  $\di(x^*, y^* ) > \di(x^*, z^*)$

Because $\di$ is continuous there exist some small $1> \delta > 0$ and $r_1>0$ so that $(1-\delta) \di(x, y) \geq \di(x, z)$ for all $x\in B_{r_1}(x^*)$ $y \in B_{r_1}\left(y^*\right), z \in B_{r_1}\left(z^*\right)$. Take $r_2:=\frac{1}{3} \min \left\{d\left(x^*, y^*\right), d\left(y^*, z^*\right)\right\}$ and set $r=r_1 \wedge r_2.$

By the same argument of Case 1, there is some $x\in E_r$ being a density point of $E_r$ and such that $(1-\delta) \di(x, y) \geq \di(x, z)$ for all $y \in B_r\left(y^*\right), z \in B_r\left(z^*\right)$. 
Since $\pi_x\left(B_r\left(y^*\right)\cap \Gamma_x\right)>0$, there exist $y\in B_r\left(y^*\right)$ such that $(x,y)\in \Gamma$. Denote $d:=\di(x,y)$, let $\gamma$ be the geodesic connecting $x$ and $y$, and take $\varepsilon = \left(\frac{\delta d}{4-2\delta}\right) $. Following the same arguments as in Case 1, for any $D<r$ there exist some $\tilde{x} \in C_{\gamma, \varepsilon}\cap E_r$ such that $\di(x, \tilde{x})=d_0<D$, and some $z\in B_r(z^*)$ such that $(\tilde{x}, z) \in \Gamma$, where $x,\tilde{x}, y, z$ are all distinct. Now we show that, for suitable $\tilde{x}$ and $z$ (constructed as just explained), the pairs $(x,y),(\tilde{x},z) \in \Gamma$ contradict $c$-cyclical monotonicity.

Given $\tilde{x} \in C_{\gamma, \varepsilon}\cap E_r$, let $t$ be the smallest time so that $\di(\tilde{x}, \gamma(t)) \leq \varepsilon t$. Then the following estimate holds:

$$\mathrm{Cost}_1 :=\di^2(x, y)-\di^2(\tilde{x}, y) \geq 2 t\left(d-\frac{d_0}{2}\right)(d-\varepsilon),$$
as in Case 1. On the other hand, we obtain that 
$$\begin{aligned}
 -\mathrm{Cost}_2 &:=\di^2(\tilde{x}, z)-\di^2(x, z) \\
& =(\di(x, z)+\di(\tilde{x}, z))(\di(\tilde x, z)-\di(x, z)) \\
& \geq - (\di(x, z)+\di(\tilde{x}, z)) d_0 \\
& \geq- t(\di(x, z)+\di(\tilde{x}, z)) (d+\varepsilon) \\
& \geq-2 t\left(\di(x, z)+\frac{d_0}{2}\right)(d+\varepsilon) \\
& \geq-2 t\left((1-\delta) d+\frac{d_0}{2}\right)(d+\varepsilon),
\end{aligned}$$
where we used that $d_0 \leq td + \varepsilon t$.

Summing the two estimates, we conclude $$\mathrm{Cost}_1 - \mathrm{Cost}_2 \geq 2 t\left\{\left(d-\frac{d_0}{2}\right)(d-\varepsilon)-\left((1-\delta) d+\frac{d_0}{2}\right)(d+\varepsilon)\right\}=2 t G\left(d_0\right).$$ 
For the selections of $\delta$ and $\varepsilon$, $G(0)>0$. Also, $G$ changes continuously as $d_0$ decreases to $0$. Recall $d_0$ can be made arbitrarily small, with $t>0$, this means $\mathrm{Cost}_1 - \mathrm{Cost}_2 > 0$ for some particular $d_0$, that is, for some selection of $\tilde{x}$ (and $z$), $\di^2(x, y)+\di^2(\tilde{x}, z)>\di^2(\tilde{x}, y)+\di^2(x, z)$ contradicting $c$-cyclical monotonicity of $\Gamma$.
\end{proof}

Combining Theorem \ref{thm:NonBranch+ DenselyScattered -> !OTM} with Proposition \ref{prop:doublingtoDS}, we immediately get the following corollary.

\begin{corollary}\label{cor:final}
    Let $(\X,\di)$ be a locally-uniformly non-branching metric space, equipped with a locally doubling reference measure $\m$. 
  Let $c(x,y)=\di^2(x,y)$. Then, for every $\mu,\nu\in \Prob(\X)$ such that $\mu\ll\m$, if $\OptPlans(\mu,\nu)\neq \emptyset$, there exists a unique optimal transport plan $\pi \in \OptPlans(\mu,\nu)$ and it is induced by a map. 
\end{corollary}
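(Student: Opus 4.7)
The proof is essentially a direct combination of the two preceding results, so the plan is very short. First I would invoke Proposition \ref{prop:doublingtoDS}: since the reference measure $\m$ is assumed to be locally doubling on the whole space $(\X, \di)$ and $\mu \ll \m$, the proposition immediately gives that $\mu$ is a densely scattered probability measure in the sense of Definition \ref{def:denselyscattered}. Note that no extra hypothesis needs to be verified here — the statement of Proposition \ref{prop:doublingtoDS} is tailored exactly to this setting.

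Next I would apply Theorem \ref{thm:NonBranch+ DenselyScattered -> !OTM} to the pair $(\mu, \nu)$ with cost $c(x,y) = \di^2(x,y)$. The hypotheses of that theorem are met: $(\X, \di)$ is locally-uniformly non-branching by assumption, $\mu$ is densely scattered by the previous step, and $\OptPlans(\mu,\nu) \neq \emptyset$ by hypothesis. The conclusion gives both existence and uniqueness of an optimal transport plan $\pi \in \OptPlans(\mu, \nu)$ induced by a map, which is exactly the statement of the corollary.

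There is no real obstacle here — the corollary is framed precisely as the bridge between the ``abstract'' notion of densely scattered measure (under which Theorem \ref{thm:NonBranch+ DenselyScattered -> !OTM} is proved) and the more classical assumption of absolute continuity with respect to a locally doubling reference measure. The only thing worth emphasising in the write-up is that Proposition \ref{prop:doublingtoDS} is applicable in full generality to any $\mu \ll \m$, so no regularity beyond $\mu \in \Prob(\X)$ and $\mu \ll \m$ is required to invoke Theorem \ref{thm:NonBranch+ DenselyScattered -> !OTM}.
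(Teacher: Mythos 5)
Your proposal is correct and coincides with the paper's own argument: the corollary is obtained exactly by applying Proposition \ref{prop:doublingtoDS} to conclude that $\mu$ is densely scattered, and then invoking Theorem \ref{thm:NonBranch+ DenselyScattered -> !OTM}. Nothing further is needed.
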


\bibliographystyle{alpha} 
\bibliography{bibliography}

\end{document}